\newcommand\ie{{\em i.e.}~}
\def\N{\mathbb{N}}
\def\T{\mathbb{T}}
\def\Z{\mathbb{Z}}
\def\C{\mathbb{C}}
\def\U{\mathscr U}
\def\F{\mathscr F}
\def\I{\mathscr I}
\def\J{\mathscr J}
\def\HH{\mathcal H}
\def\R{\mathbb{R}}
\def\FF{{\mathfrak F}}
\def\la{\langle}
\def\ra{\rangle}
\def\Id{\mathbb I}
\def\d{\mathrm d}
\def\B{\mathbb{B}}
\def\({\left(}
\def\[{\left[}
\def\){\right)}
\def\]{\right]}
\def\l|{\left\lvert}
\def\r|{\right\rvert}
\def\lp{\left\lVert}
\def\rp{\right\rVert}
\def\<{\langle}
\def\>{\rangle}
\def\Op{\mathfrak{Op}}
\def\XX{\mathfrak{X}}
\def\xx{\mathfrak{x}}
\def\ff{\mathfrak{f}}
\def\ee{\mathfrak{e}}
\def\e{\mathrm e}
\def\VV{\tilde{V}}
\def\J{\mathscr{J}}
\DeclareMathOperator{\Trigpol}{Trig\,Pol}
\newtheorem{Theorem}{Theorem}[section]
\newtheorem{Remark}[Theorem]{Remark}
\newtheorem{Lemma}[Theorem]{Lemma}
\newtheorem{Proposition}[Theorem]{Proposition}
\newtheorem{Definition}[Theorem]{Definition}
\def\mg{m_\Gamma}
\DeclareMathOperator{\Ran}{Ran}
\begin{document}
%--------------------------------------------------------------------------------------
% Title
%--------------------------------------------------------------------------------------

\title{Spectral and scattering theory for Gauss--Bonnet operators on perturbed topological crystals}

\author{D. Parra$^1$\footnote{This work was partially supported by the LABEX MILYON (ANR-10-LABX-0070) of Université de Lyon, within the program "Investissements d'Avenir" (ANR-11-IDEX-0007) operated by the French National Research Agency (ANR).}}

\date{\small}
\maketitle \vspace{-1cm}

\begin{quote}
\emph{
\begin{itemize}
\item[$^1$] Univ Lyon, Université Claude Bernard Lyon 1, CNRS UMR 5208, Institut Camille Jordan, 43 blvd. du 11 novembre 1918, F-69622 Villeurbanne cedex, France
\item[] \emph{E-mail:} parra@math.univ-lyon1.fr
\end{itemize}
}
\end{quote}

\begin{abstract}
In this paper we investigate the spectral and the scattering theory of Gauss--Bonnet operators acting on perturbed periodic combinatorial graphs. Two types of perturbation are considered: either a multiplication operator by a short-range or a long-range potential, or a short-range type modification of the graph. For short-range perturbations, existence and completeness of local wave operators are proved. In addition, similar results are provided for the Laplacian acting on edges.
\end{abstract}

\textbf{2010 Mathematics Subject Classification: 39B39, 47A10, 47A40, 05C63.}

\smallskip

\textbf{Keywords:} Discrete Gauss--Bonnet, topological crystal, spectral theory, discrete magnetic operators.

%-------------------------------------------------------------------------------------------------------
\section{Introduction}%\label{sec:intro}
%-------------------------------------------------------------------------------------------------------
In this paper we continue our study of discrete differential operators on perturbed topological crystals initiated in \cite{PR16x}. Here we study the Gauss--Bonnet operator $D=d+d^*$ as recently introduced in \cite{AT15}, see also \cite{GH14}. This is a Dirac-type operator acting both on vertices and edges. Since this operator has received much less attention compared with the combinatorial Laplacian acting on vertices, only few results are known. In \cite{AT15} the problem of essential self-adjointness for locally finite graphs is investigated. This is connected with the attention that has recently received this question for the Laplacian acting on vertices $\Delta_0$, see \cite{GS11,Ke15,MT14} and references therein.

Topological crystals are a class of periodic graphs that arise as natural generalization of the graphs used to model crystals. We refer to the monograph \cite{Su12} for a general treatment of such graphs. The spectral theory for the Gauss--Bonnet operator has not been fully developed. We refer although to \cite{GH14} where for the simpler case $\Z$ the preservation of the absolutely continuous spectrum under multiplicative perturbations was proved under some integrability assumptions; however their conditions are not directly related to the conditions that we give below. More importantly, we also consider perturbations of the periodic graph itself. This perturbation is encoded by a non periodic measure that converges at infinity to a periodic measure. This kind of metric perturbation has received some attention lately for the Laplacian acting on vertices, but only considering compactly supported perturbations \cite{AIM15} or restricting the study to the essential spectrum \cite{SS15x}. 

Our main result is that the spectrum of $D$ has a band structure that mimics the band structure of $\Delta_0$. Furthermore, we show that under short range metric perturbations and short and long range multiplicative perturbations this structure is preserved whith only finitely many eigenvalues added to it, multiplicities taken into account. The main tools used for obtaining such results are Floquet-Bloch decomposition and Mourre Theory. In fact we will extensively use abstract results coming from \cite{PR16x} that for convenience we sum up in \cref{abstracto}. For a general presentation of Mourre Theory we refer to \cite{ABG96} and to \cite{GN98} for the method that stands at the basis of the approach used in \cite{PR16x}.

In the last section of the paper, we also study the spectral and the scattering theory for the Laplacian acting on edges $\Delta_1$. This operator is dual to the Laplacian acting on vertices for which the spectral and the scattering theory were developed in \cite{PR16x}. It has received less attention than $\Delta_0$ but we can still refer to \cite{BGJ15x} where the problem of essential self-adjointness is addressed.

The paper is organized as follows. In \cref{sec:main} we define the operator and the type of periodic graphs that we consider and give our main statement in \cref{principal}. In \cref{sec:decom} we decompose the Gauss--Bonnet operator into a fibered operator. We provide the poof of our main theorem in \cref{sec:proof} and in \cref{sec:laplacian} we show a similar result for the Laplacian acting on edges.
%-------------------------------------------------------------------------------------------------------
\section{Preliminaries and main result}\label{sec:main}
%-------------------------------------------------------------------------------------------------------
In this section we define the Gauss--Bonnet operator, the type of graphs that we consider and we state our main result.

%-------------------------------------------------------------------------------------------------------
\subsection{The Gauss--Bonnet operator}
%-------------------------------------------------------------------------------------------------------

A graph $X=\big(V(X),E(X)\big)$ is composed of a set of vertices $V(X)$ and a set of unoriented edges $E(X)$. Generically, we shall use the notation $x,y$ for elements of $V(X)$, and $\e=\{x,y\}$ for elements of $E(X)$. Graphs can have loops and multiple edges. If both $V(X)$ and $E(X)$ are finite sets, the graph $X$ is said to be finite.  From the set of unoriented edges $E(X)$ we construct $ A(X)$, the set of oriented edges, considering for every unoriented edge $\{x,y\}$ both oriented edges $(x,y)$ and $(y,x)$ in $ A(X)$. The origin vertex of an oriented edge $\e$ is denoted by $o(\e)$, the terminal one by $t(\e)$, and $\overline{\e}$ denotes the edge obtained from $\e$ by interchanging the vertices, \ie $o(\overline{\e})=t(\e)$ and $t(\overline{\e})=o(\e)$. For a vertex $x\in V(X)$ we also set $ A_x=\{\e\in A(X)\mid o(\e)=x\}$. If $A_x$ is finite for every $x\in V(X)$ we say that $X$ is locally finite.

We consider the vector spaces of \emph{$0-$cochains} $C^0(X)$ and \emph{$1-$cochains} $C^1(X)$ given by:
\begin{equation*}
C^0(X):=\{f:V(X)\to\C\}\text{ ; }\qquad C^1(X):=\{f: A(X)\to\C\mid f(\e)=-f(\overline{\e})\}\text{ .}
\end{equation*}
We will denote by $C(X)$ the direct sum $C^{0}(X)\oplus C^{1}(X)$ and we will refer to an $f\in C{(X)}$ as a \emph{cochain}. This suggests the notation $C_c(X)$ for the space of cochains that are finitely supported.% and $C_0(X)$ for the space of cochains that go to $0$ at infinity, \ie that are arbitrarily small outside finite sets of vertices and edges.
 We will also use this notation for the subspaces of $0$-cochains and $1$-cochains, \ie $C^{j}_c(X)=C^{j}(X)\cap C_c(X)$.% and $C^{j}_0(X)=C^{j}(X)\cap C_0(X)$.

We say that $X$ is a weighted graph if it is equipped with a positive measure $m$ defined on both vertices and non-oriented edges. We will only consider measures with full support. On oriented edges, a measure satisfies $m(\e)=m(\overline{\e})$. Given a weighted graph $(X,m)$ we define
\begin{equation*}
\deg_m:V(X)\to(0,\infty]\text{ ; }\deg_m (x)=\sum_{\e\in A_x}\frac{m(\e)}{m(x)}
\end{equation*}
as the natural generalization of the combinatorial degree defined by $\sharp  A_x$. For $f,g\in C_c(X)$ we can define an inner product by
\begin{equation}\label{interno}
\la f,g\ra:=\sum_{x\in V(X)}m(x)f(x)\overline{g(x)}+\frac12 \sum_{\e\in A(X)}m(\e)f(\e)\overline{g(\e)}\text{.}
\end{equation}
The Hilbert space $l^2(X,m)$ is defined as the closure of $C_c(X)$ in the norm induced by \cref{interno}. It coincides with $\{f\in C(X)\mid\lp f\rp:=\la f,f\ra^{\frac12}<\infty\}$. By taking the restriction on each component of $C(X)$, one can also define the Hilbert spaces $l^2_0(X,m)$ and $l^2_1(X,m)$. When needed we will denote by $\lp\cdot\rp_0$ and $\lp\cdot\rp_1$ the corresponding norms and by $\la\cdot,\cdot\ra_0$ and $\la\cdot,\cdot\ra_1$ the corresponding inner products.

We can now define the \emph{coboundary operator} $d:C^0_c(X)\to C^1(X)$ by:
\begin{equation*}
df(\e):=f(t(\e))-f(o(\e))\text{.}
\end{equation*}
Its formal adjoint $d^*:C^1_c(X,m)\to C^0_c(X,m)$ is given by
\begin{equation*}
d^*f(x)=-\sum_{\e\in A_x}\frac{m(\e)}{m(x)}f(\e)
\end{equation*}
and corresponds to the \emph{boundary operator}. We can extend both operators by zero to get $d,d^*:C_c(X)\to C(X)$. Then we can define the Gauss--Bonnet operator $D(X,m)$ by
\begin{equation*}
D\equiv D(X,m):C_c(X)\to C(X)\quad\text{ ; }\quad D(X,m):=d+d^*\ .
\end{equation*}
It is a symmetric operator by construction. If $\deg_m$ is bounded, it can be shown that $D$ extends to a bounded operator on $l^2(X,m)$. Henceforth we assume the boundedness of $\deg_m$ since it will be the case for the periodic graphs that we shall consider. Furthermore, since $d^2=0=(d^{*})^2$, $D$ satisfies
\begin{equation}\label{deltasquare}
D(X,m)^2=dd^*+d^*d=-\Delta_0(X,m)-\Delta_1(X,m)
\end{equation}
where $\Delta_0(X,m)$ is the Laplacian of a graph acting on vertices and $\Delta_1(X,m)$ is the Laplacian acting on edges. It follows that $D$ should be considered like an analog of Dirac-type operators on manifolds because its square is a Laplacian-type operator. In fact, $-D^2$ is the discrete analog of the Hodge Laplacian, a particular case of \emph{higher order combinatorial Laplacians}, see for instance \cite{Ec45,HJ13,SKM14}. 

\begin{Remark}
When one wants to stress the fact that $l^2(X,m)=l^2_0(X,m)\oplus l^2_1(X,m)$, $D(X,m)$ is usually written in matrix form. Then \cref{deltasquare} is rewritten as:
\begin{equation*}
-\Delta(X,m):=D(X,m)^2=\begin{pmatrix}
0& d^{*}\\
d& 0
\end{pmatrix}^2=
\begin{pmatrix}
-\Delta_0(X,m)& 0\\
0& -\Delta_1(X,m)
\end{pmatrix}\text{ ,}
\end{equation*}
where $\Delta(X,m)$ stands for the Hodge Laplacian of the weighted graph $(X,m)$.
\end{Remark}

%-------------------------------------------------------------------------------------------------------
\subsection{Topological crystals}
%-------------------------------------------------------------------------------------------------------
We turn now to the particular kind of periodic graphs that we consider, known as \emph{topological crystal}. We follow the presentation of \cite{Su12} and we take this opportunity to settle some notations that will be useful for the statement of our main theorem.

A morphism $\omega$ between two graphs $X$ and $\XX$ is composed of a pair of maps $\omega:V(X)\to V(\XX)$ and $\omega:A(X)\to A(\XX)$ such that it preserves the adjacency relations between vertices and edges. Namely, for $\e\in A(X)$ we have that $\omega(\e)=(\omega(o(\e)),\omega(t(\e)))$. An isomorphism is a morphism that is a bijection on vertices and edges. We denote by $\operatorname{Aut}(X)$ the group of isomorphisms of a graph $X$ into itself. A morphism $\omega:X\to\XX$ is said to be a covering map if
\begin{enumerate}
	\item $\omega:V(X)\to V(\XX)$ is surjective,
	\item for all $x\in V(X)$, $\omega|_{A_x}:A_x\to A_{\omega(x)}$ is a bijection.
\end{enumerate}
In such a case we say that $X$ is a \emph{covering graph} over the \emph{base graph} $\XX$. The \emph{transformation group} of a covering is defined as the subgroup $\Gamma\equiv\Gamma(\omega)$ of $\operatorname{Aut}(X)$ such that for every $\mu\in\Gamma(\omega)$ the equality $\omega\circ\mu=\omega$ holds. We define a topological crystal \cite[Sec. 6.2]{Su12} as follows.
\begin{Definition}\label{topocrystal}
	A $d$-dimensional topological crystal is a quadruplet $(X,\XX,\omega,\Gamma)$ such that:
	\begin{enumerate}
		\item $X$ is an infinite graph,
		\item $\XX$ is a finite graph,\label{xxfinite}
		\item $\omega:X\to \XX$ is a covering,
		\item $\Gamma$ is the transformation group of $\omega$ and is isomorphic to $\Z^d$,\label{gammaz}
		\item $\omega$ is regular, \ie for every $x$, $y\in V(X)$ satisfying $\omega(x)=\omega(y)$ there exists $\mu\in\Gamma$ such that $x=\mu y$.\label{omegaregular}
	\end{enumerate}
\end{Definition}
We usually say that $X$ is a topological crystal meaning that it admits a topological crystal structure given by $(X,\XX,\omega,\Gamma)$. Note that by \cref{xxfinite} and the definition of a covering all topological crystals are locally finite. To simplify the notation we will denote by $x$ (resp. $\xx$) the vertices, and by $\e$ (resp. $\ee$) the edges in $X$ (resp. in $\XX$).

It follows from \cref{omegaregular} in \cref{topocrystal} that $X/\Gamma\cong \XX$ and hence we can identify $V(\XX)$ with a subset of $V(X)$. Namely, since by assumption $V(\XX)=\{\xx_1,\dots,\xx_n\}$ for some $n\in\N$, we can choose $\VV=\{x_1,\dots,x_n\}\subset V(X)$ such that $\omega(x_i)=\xx_i$. For simplicity we will also use the notation $\check{x}:=\omega(x)$ and, once $\VV$ has been chosen, we write $\hat{\xx}$ for the vertex $x\in\VV$ such that $\check{x}=\xx$. We index vertices by $\imath :V(\XX)\to \{1,\dots,n\}$ such that $\xx=\xx_{\imath(\xx)}$. 

On the other hand, we can also identify $A(\XX)$ with a subset of $A(X)$ by setting $\tilde{A}=\bigcup_{x\in\tilde{V}} A_{x}\subset A(X)$ and hence similar notations will hold for edges. Namely, we have $\check{\e}=\omega(\e)$ and $\hat{\ee}\in\tilde{ A}$ such that $\omega(\hat{\ee})=\ee$. Furthermore, if in $ A(\XX)$ we consider an ordering $ A(\XX)=\{\ee_1,\overline{\ee}_1,\dots,\ee_l,\overline{\ee}_l\}$ we can define $\imath: A(\XX)\to \{1,\dots,l\}$ by $\ee=\ee_{\imath(\ee)}$ or $\ee=\overline{\ee}_{\imath(\ee)}$. This ordering in $ A(\XX)$ induces a decomposition of $ A(X)$ in two parts $ A(X)= A^{+}(X)\cup A^{-}(X)$ by $ A^{+}(X)=\{\e\in A(x)\mid\check{\e}=\ee_{\imath(\e)}\}$ and $ A^{-}(X)=\{\e\in A(x):\check{\e}=\overline{\ee}_{\imath(\e)}\}$. This decomposition is usually referred to as choosing an orientation on $X$. We will also use the decomposition $A(\XX)=A^{+}(\XX)\cup A^{-}(\XX)$. 
%This notation enables us to compute the inner product in $C^{1}_c(X)$ as
%\begin{align*}
%\la f,g\ra_1=\frac12\sum_{\e\in A(X)}\!\!\!m(\e)f(\e)\overline{g(\e)}&=\frac12\!\!\!\sum_{\e\in A^{+}(X)}\!\!\!m(\e)f(\e)\overline{g(\e)}+\frac12\!\!\!\sum_{\e\in A^{-}(X)}\!\!\!m(\e)f(\e)\overline{g(\e)}\\
%&=\frac12\!\!\!\sum_{\e\in A^{+}(X)}\!\!\!m(\e)f(\e)\overline{g(\e)}+\frac12\!\!\!\sum_{\e\in A^{-}(X)}m(\overline{\e})f(\overline{\e})\overline{g(\overline{\e})}\\
%&=\!\!\!\sum_{\e\in A^{+}(X)}\!\!\!m(\e)f(\e)\overline{g(\e)}\ .
%\end{align*}

Note that all these notations depend only on the choice of $\VV$. With this choice we can also define the entire part of a vertex $x$, as the map $\[\,\cdot\,\]:V(X)\to\Gamma$ satisfying $\[x\]\widehat{\check{x}}=x$, and the entire part of an edge, as the map $\[\,\cdot\,\]: A(X)\to\Gamma$ satisfying $\[\e\]\widehat{\check{\e}}=\e$. The convention on $\tilde{ A}$ implies $\[\e\]=\[o(\e)\]$. We define the map 
\begin{equation}\label{etadef}
\eta: A(X)\to\Gamma\quad \text{;}\quad \eta(\e)=\[t(\e)\]\[o(\e)\]^{-1}
\end{equation}
 and we call $\eta(\e)$ the \emph{index} of the edge $\e$. For any $\mu\in\Gamma$ we have 
\begin{equation*}
\eta(\mu\e)=\[t(\mu\e)\]\[o(\mu\e)\]^{-1}=\mu\[t(\e)\]\mu^{-1}\[o(\e)\]^{-1}=\eta(\e)\text{ .}
\end{equation*}
This periodicity enables us to define $\eta(\ee)=\eta(\hat{\ee})$ unambiguously for every $\ee\in A (\XX)$. Again, this index in $ A(\XX)$ depends only on the choice of $\VV$.

%-------------------------------------------------------------------------------------------------------
\subsection{Statement of the main theorem}
%-------------------------------------------------------------------------------------------------------
A weighted topological crystal is  a quintuplet $(X,\XX,\omega,\Gamma,\mg )$ such that $(X,\XX,\omega,\Gamma)$ is a topological crystal, $(X,\mg )$ is a weighted graph and the structures are compatible in the sense that $\mg $ is $\Gamma$-periodic, \ie
\begin{equation}\label{mperiodica}
\mg (x)=\mg (\mu x) \text{ and } \mg (\e)=\mg (\mu\e) \text{ for every } x\in V(X), \e\in E(X) \text{ and } \mu\in\Gamma \text{ .} 
\end{equation}

We denote by $R:X\to\R$ a real valued function defined on both vertex and unoriented edges. We still denote by $R$ the multiplication operator defined for $f\in l^2(X,\mg )$ by
\begin{equation*}
\[Rf\](x)=R(x)f(x)\text{ ; }\[Rf\](\e)=R(\e)f(\e)\quad \forall x\in V(X)\text{ and }\e\in A(X)\ .
\end{equation*} 
We call such $R$ a potential. Note that a potential satisfies $R(\e)=R(\overline{\e})$. 

We will denote by $R_\Gamma$ a periodic potential in the sense of \cref{mperiodica}. Then we can define $H_0$ on $l^{2}(X,\mg )$ by
\begin{equation}\label{h0def}
H_0=D(X,\mg )+R_{\Gamma}\text{ .}
\end{equation}

It is known that the spectrum of $-\Delta_0(X,\mg )+R_\Gamma$ on $l^2_0(X,\mg)$ consists in a finite number of intervals of absolutely continuous spectrum and a finite number of eigenvalues of infinite multiplicity (\cite{BS99,HN09,KS14}). Our purpose here is to show that this holds true for $H_0$ and to study the essential stability of this spectral structure under perturbations. We will consider two types of perturbations. On the one hand we consider a perturbation of the measure $\mg $ by endowing $X$ with another measure $m$. In order to study this measure as a perturbation of the periodic measure $\mg $ we set $\J:l^{2}(X,m)\to l^{2}(X,\mg )$ by:
\begin{equation*}
\J f(x)=\(\frac{m(x)}{\mg (x)}\)^{\frac12}f(x)\quad\text{;}\quad\J f(\e)=\(\frac{m(\e)}{\mg (\e)}\)^{\frac12}f(\e)\text{.}
\end{equation*}
Since both measures have full support $\J$ is unitary. On the other hand, we consider also a pertubation of the periodic potential. This perturbation is defined by a potential $R$ that converges rapidly enough to $R_\Gamma$ at infinity. Then we can define our \emph{perturbed operator} $H$ on $l^2(X,\mg)$ by
\begin{equation}\label{full}
H:=\J (D(X,m)+R)\J^{*}=\J D(X,m)\J^{*}+R\ .
\end{equation}
Note that the multiplication operators commute with $\J$. The main result is described in the following theorem where we use the isomorphism between $\Gamma$ and $\Z^{d}$ to induce in the former a norm denoted by $|\cdot|$. 

\begin{Theorem}\label{principal}
	Let $X$ be a topological crystal. Let $H_0$ and $H$ be defined by \cref{h0def} and \cref{full} respectively. Assume that $m$ satisfies
	\begin{equation}
	\int_{1}^{\infty}\,\d \lambda\!\sup_{\lambda<|\[\e\]|<2\lambda}\left|\frac{m(\e)}{m(o(\e))}-\frac{\mg (\e)}{\mg (o(\e))}\right|<\infty\text{. }\label{mshortD}
	\end{equation}
	Assume also that the difference $R-R_\Gamma$ is equal to $R_S+R_L$ which satisfy
	\begin{equation}\label{potshort}
	\int_{1}^{\infty}\,\d \lambda\!\sup_{\lambda<|\[\e\]|<2\lambda}\max\{|R_S(\e)|,|R_S(o(\e))|\}<\infty\text{ ,}
	\end{equation}
	and
	\begin{equation}
	R_L\xrightarrow{x,\e\to\infty}0, \quad \hbox{ and } \quad
	\int_{1}^{\infty}\d\lambda \sup_{\lambda<|\[\e\]|<2\lambda}\big|R_L\big(\e\big)-R_L\big(o(\e)\big)\big|<\infty\ .\label{longGauss}
	\end{equation}
	Then there exists a discrete set $\tau\subset\R$ such that for every closed interval $I\subset\R\backslash\tau$ the following assertions hold in $I$:
	\begin{enumerate}
		\item $H_0$ has purely absolutely continuous spectrum,
		\item $H$ has no singular continuous spectrum and has at most a finite number of eigenvalues, each of finite multiplicity,
		\item if $R_L\equiv 0$ the local wave operators \begin{equation*}
		W_{\pm}\equiv W_{\pm}(H,H_0;I)=s-\lim_{t\to\pm\infty}e^{iH t}e^{-iH_0 t}E_{H_0}(I)
		\end{equation*}
		exist and are complete, \ie $\Ran(W_-)=\Ran(W_+)=E^{ac}_H(I)l^2(X,\mg)$. Since $\delta_{sc}(H)=\emptyset$ they are indeed asymptotically complete.
	\end{enumerate}
\end{Theorem}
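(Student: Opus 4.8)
The plan is to analyse $H_0$ through the analytic fibered structure of periodic operators and then transfer every conclusion to $H$ by the abstract Mourre-theoretic machinery recalled in \cref{abstracto}. First I would use the Floquet--Bloch decomposition carried out in \cref{sec:decom}: since $\Gamma\cong\Z^d$ acts freely on $X$ with finite quotient $\XX$, there is a unitary $\mathcal U$ diagonalising the $\Gamma$-action so that $l^2(X,\mg)\cong\int^{\oplus}_{\hat\Gamma}\HH(\xi)\,\d\xi$ over the dual torus $\hat\Gamma\cong\T^d$, each fibre $\HH(\xi)$ being the finite-dimensional space $C^0(\XX)\oplus C^1(\XX)$ of dimension $n+l$. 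Under $\mathcal U$ the operator $H_0=D(X,\mg)+R_\Gamma$ becomes a fibered operator $\int^{\oplus}_{\hat\Gamma}H_0(\xi)\,\d\xi$ whose fibre $H_0(\xi)$ is a Hermitian matrix depending on $\xi$ only through the phases $\e^{i\la\xi,\eta(\e)\ra}$ attached to the indices of \cref{etadef}; the entries are therefore trigonometric polynomials and real-analytic on $\T^d$. The identity \cref{deltasquare} fibres as well, so that for $R_\Gamma=0$ the bands of $D$ are $\pm\sqrt{\mu_j(\xi)}$ with $\mu_j$ the known bands of $-\Delta_0$; this is precisely what makes the band structure of $D$ mimic that of $\Delta_0$.

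Next I would study the eigenvalues $\lambda_1(\xi)\le\cdots\le\lambda_{n+l}(\xi)$ of $H_0(\xi)$, the band functions. Since $H_0(\xi)$ is analytic in $\xi$ on the compact torus, each band is real-analytic away from crossings and the union of their graphs is an analytic set. I would then define the exceptional set $\tau$ to consist of the values of the bands that are constant --- the flat bands, which are exactly the source of the eigenvalues of infinite multiplicity of $H_0$ --- together with the critical values $\{\lambda_j(\xi):\nabla_\xi\lambda_j(\xi)=0\}$ of the non-constant bands. Discreteness of $\tau$ follows from real-analyticity: the critical set of a non-constant analytic band is a proper analytic subvariety of $\T^d$, and its image under $\lambda_j$ is finite. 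For a closed interval $I\subset\R\setminus\tau$ every band meeting $I$ is non-constant with non-vanishing gradient there, so the pushforward of Lebesgue measure along $\lambda_j$ is absolutely continuous; this gives assertion~(a), that $H_0$ has purely absolutely continuous spectrum in $I$.

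To prepare the perturbation I would construct, following the Gérard--Nier scheme \cite{GN98} underlying \cref{abstracto}, a conjugate operator $A$ in the fibered picture. Writing $\Pi_j(\xi)$ for the spectral projection of $H_0(\xi)$ onto $\lambda_j(\xi)$, I would take $A=\mathcal U^{*}\bigl(\tfrac12\sum_k(G_k\,(-i\partial_{\xi_k})+(-i\partial_{\xi_k})\,G_k)\bigr)\mathcal U$ with $G$ assembled from the gradients $\nabla_\xi\lambda_j$, so that the fibre of $[H_0,iA]$ is, to leading order, $\sum_j|\nabla_\xi\lambda_j(\xi)|^{2}\,\Pi_j(\xi)$. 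On $I\subset\R\setminus\tau$ these gradients are bounded below, which yields a strict Mourre estimate $E_{H_0}(I)[H_0,iA]E_{H_0}(I)\ge c\,E_{H_0}(I)$ with $c>0$ and no eigenvalues of $H_0$ in $I$, in agreement with~(a).

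Finally I would feed $H_0$, the conjugate operator $A$, and the perturbation $V:=H-H_0=(\J D(X,m)\J^{*}-D(X,\mg))+(R-R_\Gamma)$ into the abstract results of \cref{abstracto}. The hypotheses \cref{mshortD}, \cref{potshort} and \cref{longGauss} are exactly the dyadic short-range (for the metric perturbation and $R_S$) and long-range (for $R_L$) conditions, measured along the $\Gamma\cong\Z^d$ distance $|\[\e\]|$ that matches the growth of $A$: they make the short-range part relatively compact with an $A$-commutator that is again short-range, and render $[R_L,iA]$ controlled by \cref{longGauss}, so that the Mourre estimate and the limiting absorption principle pass from $H_0$ to $H$ on $I$. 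The standard consequences of Mourre theory then give~(b), absence of singular continuous spectrum and local finiteness of the eigenvalues, and, when $R_L\equiv0$, they give~(c), existence and completeness of the local wave operators through Kato smoothness, asymptotic completeness following since the singular continuous spectrum is empty. The hard part will be the metric perturbation $\J D(X,m)\J^{*}-D(X,\mg)$: unlike a multiplicative potential it is an off-diagonal first-order difference operator coupling $C^0$ and $C^1$, so I must show that the single scalar quantity $\frac{m(\e)}{m(o(\e))}-\frac{\mg(\e)}{\mg(o(\e))}$ controls both this operator and its commutator with $A$ and makes it relatively compact --- this coupling between the two halves of the cochain complex is where the Gauss--Bonnet case genuinely departs from the scalar Laplacian treated in \cite{PR16x}.
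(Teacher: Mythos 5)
Your overall route is the paper's route: Floquet--Bloch reduction of $H_0$ to a real-analytic matrix-valued multiplication operator (this is \cref{h0}), followed by an application of the abstract Mourre-theoretic package \cref{abstracto}. Note, however, that everything in your second and third paragraphs --- the band functions, the definition of $\tau$ via flat bands and critical values, and the explicit conjugate operator built from $\nabla_\xi\lambda_j$ --- is already contained inside the black box \cref{abstracto} (it is the content of the results quoted from \cite{PR16x} and \cite{GN98}); re-deriving it is harmless but not where the work of this theorem lies. The actual proof consists almost entirely of verifying that $\I\U(H-H_0)\U^*\I^*$ has the form \cref{absdecompuesto} with symbols satisfying \cref{condicionshort} and \cref{condicionlong}, and on this point your proposal has two genuine gaps.

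First, the long-range potential does not fit the long-range slot of \cref{abstracto} as you use it. After fibering, $R_L$ becomes a diagonal matrix whose entries $R_L(\mu\hat{\xx}_j)$ and $R_L(\mu\hat{\ee}_{\ell-n})$ differ along the diagonal, whereas \cref{condicionlong} is a hypothesis on a \emph{scalar} function $c$ multiplying $\Id_{n+l}$. The missing idea is to set $c(\mu)=R_L(\mu\hat{\xx}_1)$ and to show that each difference $R_L(\mu\hat{\xx}_j)-R_L(\mu\hat{\xx}_1)$ (and likewise for edges) is in fact \emph{short-range}: one fixes a path $\alpha_j$ in the fundamental domain from $\hat{\xx}_1$ to $\hat{\xx}_j$, translates it by $\mu$, and telescopes the difference along the edges of $\mu\alpha_j$, so that the second condition in \cref{longGauss} gives the dyadic integrability \cref{condicionshort}; the same telescoping along paths from $\hat{\xx}_1$ to $\delta_j\hat{\xx}_1$ yields \cref{condicionlong} for $\triangle_j c$. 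Without this step the hypotheses of \cref{abstracto} are simply not met. Second, you correctly flag the metric perturbation as the hard part but supply no idea for it. What is needed is the explicit computation showing that $\I\U(\J D(X,m)\J^*-D(X,\mg))\U^*\I^*$ is a \emph{finite} sum, indexed by $\ee\in A^+(\XX)$ together with a zero-shift term, of operators $\Op(b(\ee)_{-\eta(\ee)})+\Op(b(\ee)^\dagger_{-\eta(\ee)})$ whose single nonzero entries are differences of the form $\frac{m(\mu\hat{\ee})^{1/2}}{m(\mu\hat{\xx}_j)^{1/2}}-\frac{\mg(\ee)^{1/2}}{\mg(\xx_j)^{1/2}}$; the passage from \cref{mshortD} (a condition on ratios) to \cref{condicionshort} (a condition on square roots of ratios) then uses the elementary factorization $|\sqrt a-\sqrt b|=|a-b|(\sqrt a+\sqrt b)^{-1}$ and uniform boundedness of the denominator. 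These two computations are the substance of the proof, and the proposal as written does not contain them.
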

Both \cref{mshortD} and \cref{potshort} describe a \emph{short-range} type of decay at infinity while \cref{longGauss} is usually referred as a \emph{long-range} type of decay.

%It is interesting to note that, in contrast with \cite[Theorem 2.2]{PR16x}, the difference of the measures is defined with respect to the square root of the quotient of the measures, and not with respect to quotients itself. This is coherent with the fact that the Gauss--Bonnet operator is of order $1$, since we have two factors of order $\frac12$. In fact, in \cref{sec:laplacian} we will reencounter for $\Delta_1(X,m)$ the same type of decay that was needed for $\Delta_0(X,m)$.
%-------------------------------------------------------------------------------------------------------
\section{Integral decomposition}\label{sec:decom}
%-------------------------------------------------------------------------------------------------------
The aim of this section is to show that $H_0$ can be decomposed into a direct integral. For $\Delta_0(X,\mg)$ this decomposition has been an important tool for studying its spectral properties (see \cite{An13,HN09,KS14,KSS98}). The integral decomposition of the Gauss--Bonnet operator seems not to have been observed before but could have been easily deduced from the arguments presented in \cite[Sec. 6]{KS15x}. We first define magnetic operators, then we show that $H_0$ can be decomposed into the direct integral of magnetic Gauss--Bonnet operators defined on the small graph $\XX$ and finally we show how this decomposition can be transformed into a constant fiber decomposition.

\subsection{Discrete magnetics operators on a graph}
Let us start by introducing the magnetic scheme for a general weighted graph $(X,m)$ with bounded degree. The discrete analogue for a magnetic Laplacian operator has been widely studied \cite{CTT11a,HS01,HS99a,KS15x,Su94}. Let $\T$ denote the multiplicative group of complex numbers of modulus 1. For any $\theta:A(X)\to\T$ satisfying $\theta(\overline{\e})=\overline{\theta(\e)}$ one defines the \emph{magnetic Laplacian acting on vertices} for suitable $f$ by
\begin{equation*}%\label{delta0theta}
\[\Delta_{0}(X_\theta,m)f\](x)=\sum_{\e\in A_x}\frac{m(\e)}{m(x)}\Bigl(\theta(\e)f(t(\e))-f(x)\Bigr)\text{.}
\end{equation*}
There are several ways to adapt the coboundary operator $d$ to fit this magnetic scheme. We choose to set the space of \emph{magnetics 1-cochains} by
\begin{equation*}
C^1(X_\theta):=\{f:A(X)\to\C\mid f(\overline{\e})=-\overline{\theta(\e)}f(\e)\}\ .
\end{equation*}
We denote by $l^2(X_\theta,m)$ the Hilbert space defined as the closure of $C_c(X_\theta,m)=C^0_c(X,m)\oplus C^1_c(X_\theta,m)$ in the norm still induced by the inner product defined by \cref{interno}. Note that the space of $0$-cochains remains unchanged by $\theta$. Setting $d_\theta:C^{0}_c(X)\to C^{1}(X_\theta)$ by:
\begin{equation}\label{dmagnetic}
d_\theta f(\e)=\theta(\e)f(t(\e))-f(o(\e))\text{ ,}
\end{equation}
we get the desired factorization of $\Delta_{0}(X_\theta,m)$ as shown in the next lemma.
\begin{Lemma}
Let $d_\theta$ be defined by \cref{dmagnetic}. It is a well defined bounded operator, with adjoint given by 
\begin{equation*}
d^*_\theta f(x)=-\sum_{\e\in A_x}\frac{m(\e)}{m(x)}f(\e)\text{ .}
\end{equation*}
It satisfies $-d^*_\theta d_\theta=\Delta_{0}(X_\theta,m)$. The \emph{magnetic Laplacian acting on edges} is given by
\begin{equation*}%\label{delta1theta}
\[\Delta_{1}(X_\theta,m)f\](\e)=\sum_{\e'\in A_{t(\e)}}\frac{m(\e')}{m(t(\e))}\theta(\e) f(\e')-\sum_{\e'\in A_{o(\e)}}\frac{m(\e')}{m(o(\e))} f(\e')\text{.}
\end{equation*}
\end{Lemma}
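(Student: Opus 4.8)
The plan is to verify each of the three assertions of the lemma by direct computation, working from the definition \cref{dmagnetic} and the inner product \cref{interno}. First I would establish that $d_\theta$ is well defined, \ie that for $f\in C^0_c(X)$ the image $d_\theta f$ genuinely lies in $C^1(X_\theta)$. This amounts to checking the defining relation $\big(d_\theta f\big)(\overline\e)=-\overline{\theta(\e)}\big(d_\theta f\big)(\e)$. Using \cref{dmagnetic} and the facts that $o(\overline\e)=t(\e)$, $t(\overline\e)=o(\e)$, together with $\theta(\overline\e)=\overline{\theta(\e)}$ and $|\theta(\e)|=1$, this is a one-line manipulation: one computes $\big(d_\theta f\big)(\overline\e)=\theta(\overline\e)f(o(\e))-f(t(\e))=\overline{\theta(\e)}f(o(\e))-f(t(\e))$ and factors out $-\overline{\theta(\e)}$, using $\overline{\theta(\e)}\theta(\e)=1$. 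Boundedness follows from the assumed boundedness of $\deg_m$ exactly as for the non-magnetic $d$ recalled in the preliminaries, since $|\theta(\e)|=1$ means the magnetic phases do not affect the norm estimates.

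Next I would identify the adjoint $d_\theta^*$. The natural approach is to compute $\la d_\theta f,g\ra_1$ for $f\in C^0_c(X)$ and $g\in C^1_c(X_\theta)$ and rearrange it into the form $\la f,h\ra_0$, thereby reading off $h=d_\theta^* g$. Expanding $\la d_\theta f,g\ra_1=\tfrac12\sum_{\e\in A(X)}m(\e)\big(d_\theta f\big)(\e)\,\overline{g(\e)}$ and substituting \cref{dmagnetic}, the term $\theta(\e)f(t(\e))$ can, after relabelling $\e\mapsto\overline\e$ and using the cochain relations $g(\overline\e)=-\overline{\theta(\e)}g(\e)$ and $m(\overline\e)=m(\e)$, be matched against the term $f(o(\e))$; the two contributions combine to remove the factor of $\tfrac12$ and yield a sum over $A_x$ at each vertex. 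The outcome should be $d_\theta^* g(x)=-\sum_{\e\in A_x}\tfrac{m(\e)}{m(x)}g(\e)$, which is formally identical to the non-magnetic $d^*$ from the preliminaries — the phases $\theta$ cancel against those hidden in the cochain symmetry of $g$. With $d_\theta^*$ in hand, the identity $-d_\theta^*d_\theta=\Delta_0(X_\theta,m)$ is a direct substitution: applying $d_\theta$ then $d_\theta^*$ produces $-\sum_{\e\in A_x}\tfrac{m(\e)}{m(x)}\big(\theta(\e)f(t(\e))-f(x)\big)$, which is exactly the stated magnetic Laplacian up to the sign.

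Finally, for the magnetic Laplacian acting on edges I would compute $\Delta_1(X_\theta,m)=-d_\theta d_\theta^*$ by composing the two operators already obtained. Applying $d_\theta^*$ to a $1$-cochain $f$ gives a $0$-cochain, and then applying $d_\theta$ via \cref{dmagnetic} produces, at an oriented edge $\e$, the expression $\theta(\e)\big(d_\theta^*f\big)(t(\e))-\big(d_\theta^*f\big)(o(\e))$. Substituting the formula for $d_\theta^*$ at the two vertices $t(\e)$ and $o(\e)$ yields precisely the two sums in the statement, with the phase $\theta(\e)$ multiplying only the sum over $A_{t(\e)}$ as claimed. The main obstacle — really the only delicate point — is the bookkeeping in the adjoint computation: one must track the phases $\theta(\e)$ carefully through the relabelling of oriented edges and the antisymmetry relations of magnetic $1$-cochains to confirm that they conspire to cancel, leaving a $d_\theta^*$ that looks phase-free. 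Everything else is mechanical once that cancellation is secured.
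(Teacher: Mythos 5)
Your proposal is correct and follows essentially the same route as the paper: verify membership of $d_\theta f$ in $C^1(X_\theta)$ via the antisymmetry relation, compute the adjoint by splitting the edge sum, relabelling $\e\mapsto\overline\e$ and using $g(\overline\e)=-\overline{\theta(\e)}g(\e)$ together with $m(\overline\e)=m(\e)$ so the two halves combine and the phases cancel, then obtain the two Laplacians by direct composition. The phase bookkeeping you flag as the delicate point is exactly the step the paper carries out explicitly, and your conclusion that $d_\theta^*$ is formally phase-free matches the paper's formula.
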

\begin{proof}
	It is easy to see that 
	\begin{equation*}
	d_\theta f(\overline{\e})=\theta(\overline{\e})f(t(\overline{\e}))-f(o(\overline{\e}))=-\overline{\theta(\e)}(-f(o(\e))+\theta(\e)f(t(\e))=-\overline{\theta(\e)}d_\theta f(\e)
	\end{equation*}
	to get that $d_\theta f\in C^1(X_\theta)$. Furthermore, for $f\in C_c^0(X)$ and $g\in C_c^1(X_\theta)$ we have:
	\begin{align*}
	\la d_\theta f ,g \ra_1=&\frac12\sum_{\e\in A(X)}m(\e)\theta(\e)f(t(\e))\overline{g(\e)}-\frac12\sum_{\e\in A(X)}m(\e)f(o(\e))\overline{g(\e)}\\
	=&-\frac12\sum_{\e\in A(X)}m(\overline{\e})\overline{\theta(\overline{\e})}f(o(\overline{\e}))\theta(\overline{\e})\overline{g(\overline{\e})}-\frac12\sum_{\e\in A(X)}m(\e)f(o(\e))\overline{g(\e)}\\
	=&-\!\sum_{\e\in A(X)}m(\e)f(o(\e))\overline{g(\e)}=\!\sum_{x\in V(X)}m(x)f(x)\Bigl(\overline{-\sum_{\e\in A_x}\frac{m(\e)}{m(x)}g(\e)}\Bigr)=:\la  f,d_\theta^{*}g \ra_0\text{ .}
	\end{align*}
	Simple compositions verify the rest of the lemma.
\end{proof}
 
Note that the magnetic boundary operator is defined by the same formula that the non-magnetic one, but it acts in a different space. As in the previous section, we can extend $d_\theta$ and $d_\theta^*$ by zero and define the \emph{magnetic Gauss--Bonnet operator} $D(X_\theta,m)$ on $l^2(X_\theta,m)$ by:
\begin{equation*}
D(X_\theta,m):=d_\theta+ d_\theta^{*}\text{.}
\end{equation*}
It obviously satisfies
\begin{equation*}%\label{diracsquare}
D(X_\theta,m)^2=-\Delta_{0}(X_\theta,m)-\Delta_{1}(X_\theta,m)\text{ .}
\end{equation*}
\subsection{Decomposition into magnetic operators}
As pointed out before the decomposition into a direct integral for the operator $\Delta_0(X,\mg )$ of a weighted topological crystal $(X,\XX,\omega,\Gamma,\mg )$ is well known. It can be implemented by $\U:l^2_0(X,\mg )\to L^2(\hat{\Gamma};l^2_0(\XX,\mg ))$ defined by:
\begin{equation*}
(\U f)(\xi,\xx)=\sum_{\mu\in\Gamma}\overline{\xi(\mu)}f(\mu\hat{\xx})\ ,
\end{equation*}
where $\hat{\Gamma}$ denotes the dual group of $\Gamma$ and the measure $\mg $ is defined unambiguously on $\XX$ by $\mg (\xx)=\mg(\hat{\xx})$ and $\mg (\ee)=\mg(\hat{\ee})$.

Our aim now is to show that $\U$ can be extended to $l^2(X,\mg )$. For every $\xi\in\hat{\Gamma}$ we define
\begin{equation*}
\theta_\xi: A(\XX) \to \T, \quad \theta_\xi(\ee):= \xi\big(\eta(\ee)\big)\ .
\end{equation*}
Recall that $\eta$ was defined in \cref{etadef}. Then one observes that
\begin{equation*}
\theta_\xi(\overline{\ee})=\xi\big(\eta(\overline{\ee})\big)=\xi\big(\eta(\ee)^{-1}\big) = \overline{\xi\big(\eta(\ee)\big)} = \overline{\theta_\xi(\ee)}.
\end{equation*}
It follows that the Hilbert space $l^2(\XX_{\theta_\xi},\mg )$ is well defined for every $\xi\in\hat{\Gamma}$. Since $\Gamma$ is discrete, it is endowed naturally with the counting measure. Hence $\hat{\Gamma}$ has a normalized dual measure denoted by $\d\xi$. We can then define the fibered Hilbert space 
\begin{equation*}
\HH:=\int_{\hat{\Gamma}}^{\oplus}\d\xi\, l^2(\XX_{\theta_\xi},\mg )\ .
\end{equation*}
Obviously $L^2(\hat{\Gamma};l^2_0(\XX,\mg ))\subset \HH$ since the $l^2_0(\XX,\mg )$ is not changed by $\theta_\xi$. We will denote by $\HH_0$ the constant fiber part given by $L^2(\hat{\Gamma};l^2_0(\XX,\mg ))$. We also set for convenience $\HH_1:=\HH\ominus\HH_0$. For an element $\varphi\in \HH$ we will mostly use the notations $\varphi(\xi,\xx):=\[\varphi(\xi)\](\xx)$ and $\varphi(\xi,\ee):=\[\varphi(\xi)\](\ee)$. The unitary equivalence between $l^2(X,\mg )$ and $\HH$ is stated in the next lemma.

\begin{Lemma}\label{Uunitary}
Let $\U:C_c(X)\to \HH$ be defined for all $\xi\in\hat{\Gamma}$, $\xx\in V(\XX)$ and $\ee\in A(\XX)$ by:
\begin{equation*}
(\U f)(\xi,\xx)=\sum_{\mu\in\Gamma}\overline{\xi(\mu)}f(\mu\hat{\xx})\quad ; \quad(\U f)(\xi,\ee)=\sum_{\mu\in\Gamma}\overline{\xi(\mu)}f(\mu\hat{\ee})\text{ .}
\end{equation*}
Then $\U$ extends to a unitary operator, still denoted by $\U$, from $l^2(X,\mg )$ to $\HH$.
\end{Lemma}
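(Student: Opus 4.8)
The plan is to prove unitarity of $\U$ by establishing that it is a well-defined isometry from a dense subspace and then that its range is dense (or that it has a bounded inverse given by an explicit formula), the two standard ingredients for extending an isometry to a unitary. The key structural observation is that $\U$ is nothing but a fiberwise Fourier transform along the group $\Gamma\cong\Z^d$, applied independently on the vertex and edge components. Since the vertex part $(\U f)(\xi,\xx)=\sum_{\mu\in\Gamma}\overline{\xi(\mu)}f(\mu\hat{\xx})$ is exactly the transform already known to be unitary from $l^2_0(X,\mg)$ onto $\HH_0=L^2(\hat\Gamma;l^2_0(\XX,\mg))$, the whole content of the lemma is the analogous statement for the edge part, together with the compatibility of the edge fibers with the twisted cochain condition $f(\overline\e)=-\overline{\theta_\xi(\e)}f(\e)$ that defines $C^1(\XX_{\theta_\xi})$.

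First I would verify that for $f\in C_c(X)$ the image $\U f$ actually lands in $\HH$, \ie that for each fixed $\xi$ the edge component $(\U f)(\xi,\cdot)$ satisfies the magnetic $1$-cochain relation. This is the step where the definition of $\eta$ and $\theta_\xi$ must interlock correctly: using $f(\overline\e)=-f(\e)$ on $X$, the equality $\eta(\overline\ee)=\eta(\ee)^{-1}$, and the already-verified identity $\theta_\xi(\overline{\ee})=\overline{\theta_\xi(\ee)}$, one should be able to compute $(\U f)(\xi,\overline\ee)=-\overline{\theta_\xi(\ee)}(\U f)(\xi,\ee)$ directly from the definition, the point being that passing from $\ee$ to $\overline\ee$ shifts the base point of the sum by the index $\eta$, which is precisely what produces the phase $\theta_\xi$. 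This is the conceptual heart of why the magnetic twist appears, and I expect it to be the main obstacle — not because it is deep, but because it requires bookkeeping the identifications $\hat\ee$, $\mu\hat\ee$, and $\[\e\]$ carefully so that the reindexing of the sum matches the index $\eta(\ee)$.

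Next I would prove the isometry property. The natural route is to compute $\lp\U f\rp_{\HH}^2$ by expanding the inner product \cref{interno} fiber-by-fiber and integrating over $\hat\Gamma$, invoking the orthogonality relation $\int_{\hat\Gamma}\overline{\xi(\mu)}\xi(\nu)\,\d\xi=\delta_{\mu,\nu}$ for the normalized dual measure. Because the inner product on $l^2(X,\mg)$ splits as a vertex sum plus an edge sum, and $\mg$ is $\Gamma$-periodic (so $\mg(\mu\hat\xx)=\mg(\xx)$ and $\mg(\mu\hat\ee)=\mg(\ee)$), the vertex contribution reproduces the known unitarity onto $\HH_0$, while the edge contribution collapses by the same orthogonality to $\tfrac12\sum_{\ee\in A(\XX)}\mg(\ee)\sum_{\mu}|f(\mu\hat\ee)|^2$, which is exactly the $l^2_1$-norm of $f$ after using $A(X)=\bigcup_{\mu,\ee}\mu\hat\ee$. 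I would simply remark that the edge computation is formally identical to the vertex one since $\theta_\xi$ is a unimodular phase and does not affect absolute values.

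Finally, to upgrade the isometry to a surjection I would exhibit the inverse explicitly, setting $(\U^{*}\varphi)(\mu\hat\xx)=\int_{\hat\Gamma}\xi(\mu)\varphi(\xi,\xx)\,\d\xi$ and $(\U^{*}\varphi)(\mu\hat\ee)=\int_{\hat\Gamma}\xi(\mu)\varphi(\xi,\ee)\,\d\xi$, and checking on the dense set of $\varphi$ with smooth (trigonometric-polynomial) dependence on $\xi$ that $\U\U^{*}=\Id$ and $\U^{*}\U=\Id$, again by the Fourier orthogonality relations. Since $C_c(X)$ is dense in $l^2(X,\mg)$ and the trigonometric polynomials are dense in each fiber and in $L^2(\hat\Gamma)$, the bounded extension of $\U$ is unitary. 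The only care needed here is that $\U^{*}\varphi$ respects both the ordinary $1$-cochain condition on $X$ and the measurability/square-integrability across fibers, both of which follow from the fiberwise magnetic relation established in the first step.
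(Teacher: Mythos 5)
Your proposal is correct and follows essentially the same route as the paper: verify the magnetic $1$-cochain relation for $\U f(\xi,\cdot)$ via the reindexing identity $\mu\hat{\overline{\ee}}=\mu\eta(\ee)^{-1}\overline{\hat{\ee}}$, then exhibit $\U^{*}$ by the same explicit Fourier-inversion formula and check $\U^{*}\U=\Id$ and $\U\U^{*}=\Id$ on the dense subspaces $C_c(X)$ and the trigonometric polynomials. The separate Parseval computation of the isometry is redundant given that you verify both compositions, but it is harmless.
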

\begin{proof}
First we check that $\U$ is well defined. This is, that $\U f(\xi)\in l^2(\XX_{\theta_\xi},\mg )$ for every $\xi\in\hat{\Gamma}$. Indeed one has,
\begin{align*}
\[\U f(\xi)\](\overline{\ee})=&\sum_{\mu\in\Gamma}\overline{\xi(\mu)}f(\mu\hat{\overline{\ee}})\\
=&\sum_{\mu\in\Gamma}\overline{\xi(\mu)}f(\mu\eta(\ee)^{-1}\overline{\hat{\ee}})\\
=&\sum_{\mu\in\Gamma}\overline{\xi(\mu)}\overline{\xi(\eta(\ee))}f(\mu\overline{\hat{\ee}})\\
=&-\overline{\theta_\xi(\ee)}\sum_{\mu\in\Gamma}\overline{\xi(\mu)}f(\mu\hat{\ee})=-\overline{\theta_\xi(\ee)}\[\U f(\xi)\](\ee)\ ,
\end{align*}
where we used the fact $\mu\hat{\overline{\ee}}=\mu\eta(\ee)^{-1}\overline{\hat{\ee}}$.

We can notice that $\U (C_c(X))$ is dense in $\HH$. Indeed, it coincides with elements $\varphi$ such that there exist a family $\{\varphi_\mu\}_{\mu\in\Z^d}\subset l^2(\XX,\mg)$ with only a finite number non identically zero and satisfying for $\xx\in V(\XX)$ and $\xi\in\hat{\Gamma}$:
	\begin{equation*}
\[\varphi(\xi)\](\xx)=\sum_{\mu\in\Z^d}\xi(\mu)\varphi_\mu(\xx)\text{ , }
\end{equation*}
and for $\ee\in A^+(\XX)$
\begin{equation*}
\[\varphi(\xi)\](\ee)=\sum_{\mu\in\Z^d}\xi(\mu)\varphi_\mu(\ee)\text{ and }\[\varphi(\xi)\](\overline{\ee})=\sum_{\mu\in\Z^d}\xi(\mu)\overline{\xi(\eta(\ee))}\varphi_\mu(\overline{\ee})%=-\sum_{\mu\in\Z^d}\xi(\mu)\overline{\theta_\xi(\ee)}\varphi_\mu(\ee)
\ .
	\end{equation*} 
This dense subspace will be denote by $\Trigpol(\HH)=\Trigpol(\HH_0)\oplus\Trigpol(\HH_1)$ by analogy to $L^2(\T^d;\C^{n+l})$. We will now compute $\U^{*}$. Only the calculations corresponding to $l^2_1(X,\mg )$ will be made explicitly. Let $f\in C^1_c(X)$ and $\varphi\in\Trigpol(\HH_1)$. Then
\begin{align*}
\la \U f, \varphi \ra_{\HH_1}=&\int_{\hat{\Gamma}}\d\xi\,\frac12 \sum_{\ee\in A(\XX)}\!\sum_{\mu\in\Gamma}\mg (\ee)\overline{\xi(\mu)}f(\mu\hat{\ee})\overline{\varphi(\xi,\ee)}\\
=&\int_{\hat{\Gamma}}\d\xi\,\frac12\sum_{\e\in A(X)}\mg(\e)\overline{\xi([\e])}f(\e)\overline{\varphi(\xi,\check{\e})}\\
=&\frac12 \sum_{\e\in A(X)} \mg (\e)f(\e)\overline{\int_{\hat{\Gamma}}\d\xi\,\xi([\e])\varphi(\xi,\check{\e})}=:\la f , \U^{*}\varphi\ra_1
\end{align*}
To see that $\U$ is indeed unitary we take $f\in C_c(X)$ and compute
\begin{align*}
\[\U^*\U f\](\e)&=\int_{\hat{\Gamma}}\d\xi\,\xi(\[\e\])\sum_{\mu\in\Gamma}\overline{\xi(\mu)}f(\mu\widehat{\check{\e}})\\
&=\sum_{\mu\in\Gamma}f(\mu\widehat{\check{\e}})\int_{\hat{\Gamma}}\d\xi\,\xi(\[\e\])\overline{\xi(\mu)}=f(\[\e\]\widehat{\check{\e}})=f(\e)\ .
\end{align*}
For $\varphi\in\Trigpol(\HH_1)$
\begin{align*}
\[\U\U^* \varphi\](\xi,\ee)=&\sum_{\mu\in\Gamma}\overline{\xi(\mu)}\int_{\hat{\Gamma}}\!\d\xi'\,\xi'(\[\mu\hat{\ee}\]) \varphi(\xi',\omega(\mu\hat{\ee}))\\
=&\sum_{\mu\in\Gamma}\int_{\hat{\Gamma}}\!\d\xi'\,\overline{\xi(\mu)}\xi'(\mu)\varphi(\xi',\ee)= \varphi(\xi,\ee)\,\text{.}\qedhere
\end{align*}
\end{proof}
For convenience we sum up the formulas for $\U^*$. For $\varphi\in\Trigpol(\HH)$ we have:
\begin{equation*}
(\U^* \varphi)(x)=\int_{\hat{\Gamma}}\d\xi\,\xi([x])\varphi(\xi,\check{x})\text{ ; }(\U^* \varphi)(\e)=\int_{\hat{\Gamma}}\d\xi\,\xi([\e])\varphi(\xi,\check{\e})\text{ .}
\end{equation*}
We now show that by conjugation by $\U$, $H_0$ is unitarily equivalent to the direct integral of magnetic operators. Note that the periodicity of $R_\Gamma$ allows to see it as a multiplication operator on $l^2(\XX_{\theta_\xi},\mg )$ for every $\xi\in\hat{\Gamma}$. 

\begin{Lemma}\label{lemmadecom}
	Let $(X,\XX,\omega,\Gamma,\mg)$ be a weighted topological crystal. Let $H_0$ be defined by \cref{h0def}. Then
	\begin{equation*}
	\U H_0\U^{*}=\int_{\hat{\Gamma}}^{\oplus}\!\!\d\xi\, (D_{\theta_\xi}(\XX,\mg) +R_\Gamma)\text{ .}
	\end{equation*}
\end{Lemma}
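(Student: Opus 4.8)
The plan is to use the linearity of $H_0=d+d^{*}+R_\Gamma$ together with the unitarity of $\U$ established in \cref{Uunitary}, treating each of the three summands separately. Since $\U(C_c(X))=\Trigpol(\HH)$ is dense and all operators in sight are bounded, it suffices to verify the intertwining relations on $C_c(X)$ and pass to the closure by continuity. Concretely, I would establish the three fibered identities
\begin{equation*}
\U\, d\,\U^{*}=\int_{\hat\Gamma}^{\oplus}\d\xi\, d_{\theta_\xi}\,,\qquad \U\, d^{*}\,\U^{*}=\int_{\hat\Gamma}^{\oplus}\d\xi\, d_{\theta_\xi}^{*}\,,\qquad \U\, R_\Gamma\,\U^{*}=\int_{\hat\Gamma}^{\oplus}\d\xi\, R_\Gamma\,,
\end{equation*}
and then add them, recalling that $D_{\theta_\xi}(\XX,\mg)=d_{\theta_\xi}+d_{\theta_\xi}^{*}$ on each fiber.

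The crux is the first identity, which is where the magnetic phase $\theta_\xi$ is produced. For $f\in C^0_c(X)$ and $\ee\in A(\XX)$ I would compute directly
\begin{equation*}
(\U df)(\xi,\ee)=\sum_{\mu\in\Gamma}\overline{\xi(\mu)}\Bigl(f\bigl(t(\mu\hat\ee)\bigr)-f\bigl(o(\mu\hat\ee)\bigr)\Bigr)\,.
\end{equation*}
The key geometric input is that $\hat\ee\in\tilde A$ forces $o(\hat\ee)=\widehat{o(\ee)}$, while the very definition of the index in \cref{etadef} gives $t(\hat\ee)=\eta(\ee)\,\widehat{t(\ee)}$; applying $\mu$ yields $o(\mu\hat\ee)=\mu\widehat{o(\ee)}$ and $t(\mu\hat\ee)=\mu\eta(\ee)\widehat{t(\ee)}$. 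The term coming from $o(\mu\hat\ee)$ is immediately $(\U f)(\xi,o(\ee))$. For the other term I would reindex the sum by $\nu=\mu\eta(\ee)$; since $\xi$ is a character valued in $\T$, the factor $\overline{\xi(\nu\eta(\ee)^{-1})}$ splits off $\overline{\xi(\eta(\ee)^{-1})}=\xi(\eta(\ee))=\theta_\xi(\ee)$, so this term equals $\theta_\xi(\ee)\,(\U f)(\xi,t(\ee))$. Altogether $(\U df)(\xi,\ee)=\theta_\xi(\ee)(\U f)(\xi,t(\ee))-(\U f)(\xi,o(\ee))=\bigl(d_{\theta_\xi}\U f(\xi)\bigr)(\ee)$, which is precisely the first identity.

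The remaining two identities are light. The adjoint relation follows without further computation: $\U$ is unitary and the adjoint of a decomposable operator is the fibered adjoint, so $\U d^{*}\U^{*}=(\U d\U^{*})^{*}=\int^{\oplus}\d\xi\, d_{\theta_\xi}^{*}$. For the potential, one computes $(\U R_\Gamma f)(\xi,\xx)=\sum_\mu\overline{\xi(\mu)}R_\Gamma(\mu\hat\xx)f(\mu\hat\xx)$ and uses the $\Gamma$-periodicity \cref{mperiodica} of $R_\Gamma$, namely $R_\Gamma(\mu\hat\xx)=R_\Gamma(\xx)$, to pull the scalar $R_\Gamma(\xx)$ out of the sum, and identically on edges; this exhibits the fibered multiplication operator already described before the lemma. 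Summing the three identities over the fibers yields the claim. I expect the only genuine obstacle to be the bookkeeping in the coboundary step — correctly tracking that the index $\eta(\ee)$ enters through the terminal vertex and that the reindexing $\mu\mapsto\mu\eta(\ee)$ is exactly what converts the $\Gamma$-shift into the character value $\theta_\xi(\ee)$; everything else is routine.
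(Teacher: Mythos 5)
Your proof is correct and follows essentially the same route as the paper: a direct computation showing that conjugation by $\U$ turns the coboundary operator into its magnetic counterpart, with the phase $\theta_\xi(\ee)=\xi(\eta(\ee))$ produced exactly as you describe, from $t(\mu\hat\ee)=\mu\eta(\ee)\widehat{t(\ee)}$ and a reindexing of the $\Gamma$-sum. The only cosmetic differences are that the paper computes $\U D(X,\mg)\U^*$ on $\Trigpol(\HH)$ and treats the vertex component (giving $d^*_{\theta_\xi}$) by a second explicit calculation, whereas you verify the forward intertwining $\U d=\bigl(\int_{\hat\Gamma}^{\oplus}\d\xi\, d_{\theta_\xi}\bigr)\U$ on $C_c(X)$ and obtain the $d^*$ identity by taking adjoints of the decomposable operator --- a legitimate shortcut given the boundedness of $D$.
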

\begin{proof}
 For $\varphi\in\Trigpol(\HH)$, see the proof of \cref{Uunitary}, we can compute
	\begin{align*}
	&(\U D(X,\mg)\U^* \varphi)(\xi,\ee)\\
	&=\sum_{\mu\in\Gamma}\overline{\xi(\mu)}\!\!\int_{\hat{\Gamma}}\!\!\d\chi\,\Bigl(\chi([t(\mu\hat{\ee})]) \varphi(\chi,\omega(t(\mu\hat{\ee})))\!-\chi([o(\mu\hat{\ee})]) \varphi(\chi,\omega(o(\mu\hat{\ee})))\Bigr)\\
	&=\sum_{\mu\in\Gamma}\overline{\xi(\mu)}\int_{\hat{\Gamma}}\!\d\chi\,\chi(\mu)\Bigl(\chi(\eta(\hat{\ee})) \varphi(\chi,t(\ee))-\varphi(\chi,o(\ee))\Bigr)\\
	&=\int_{\hat{\Gamma}}\!\d\chi\sum_{\mu\in\Gamma}\overline{\xi(\mu)}\chi(\mu)\Bigl(\chi(\eta(\hat{\ee})) \varphi(\chi,t(\ee))- \varphi(\chi,o(\ee))\Bigr)\\
	&=\xi(\eta(\hat{\ee}))\varphi(\xi,t(\ee))-u(\xi,o(\ee))=(d_{\theta_\xi} \varphi)(\xi,\ee)\text{.}
	\end{align*}
	We used the fact that $\varphi \in\Trigpol(\HH)$ in order to change the order of integration, and also the identity $[ t(\mu\hat{\ee})]=\mu\eta(\hat{\ee})$ and $[o(\mu\hat{\ee})]=\mu$. Similarly we have
	\begin{align*}
	(\U D(X,\mg)\U^* \varphi)(\xi,\xx)=&-\sum_{\mu\in\Gamma}\overline{\xi(\mu)}\sum_{\e\in A_{\mu\hat{\xx}}}\frac{\mg (\e)}{\mg (\mu\hat{\xx})}\int_{\hat{\Gamma}}\!\d\chi\,\chi([\e])\varphi(\chi,\check{\e})\\
	=&-\sum_{\ee\in A_{\xx}}\frac{\mg (\ee)}{\mg (\xx)}\int_{\hat{\Gamma}}\!\d\chi\Bigl(\sum_{\mu\in\Gamma}\overline{\xi(\mu)}\chi(\mu)\Bigr)\varphi(\chi,\ee)\\
	=&-\sum_{\ee\in A_{\xx}}\frac{\mg (\ee)}{\mg (\xx)}\varphi(\xi,\ee)=d^*_{\theta_\xi} \varphi(\xi,\xx)\text{ .}
	\end{align*}
Since the periodicity of $R_\Gamma$ implies that $\U R_\Gamma\U^*=\int_{\hat{\Gamma}}^{\oplus}\!\d\xi R_\Gamma$ we get the desired result.
\end{proof}
\begin{Remark}
It is interesting to notice that the assumption that $\XX$ is finite does not play a role in this decomposition. This means that if one can give conditions that ensure that each operator of the family $D_\theta$ or of the family $\Delta_\theta$ has compact resolvent, one could also study periodic graphs over infinite graphs.
\end{Remark}
\subsection{Integral decomposition with a constant fiber}
In this subsection we show that this decomposition can be made into a constant fiber decomposition. In particular we are interested to get the unitary equivalence between $H_0$ and a matrix-valued multiplication operator on $L^2(\T^d;\C^{n+l})$. Here $\T^d$ stands for the $d-$dimensional (flat) torus, \ie $\T^d:=\R^d/\Z^d$. For this some identifications are necessary. Since $\Gamma$ is isomorphic to $\Z^d$, as stated in \cref{gammaz} of \cref{topocrystal}, we know that $\hat{\Gamma}$ is isomorphic to $\T^d$. In fact, we will consider that a basis of $\Gamma$ is chosen
 and then identify $\Gamma$ with $\Z^{d}$, and accordingly $\hat{\Gamma}$ with $\T^d$. As a consequence of these identifications we shall write $\xi(\mu)=e^{2\pi i\,\xi\cdot\mu}$, where $\xi\cdot\mu = \sum_{j=1}^d\xi_j\mu_j$. Note finally that for elements in $\Z^d$ we use the additive notation but we maintain the use of the juxtaposition for the action of $\Z^d$ on $X$ defined via the identification with $\Gamma$.
 
 The second necessary identification is between $l^2(\XX_{\theta_\xi},\mg )$ and $\C^{n+l}$ for every $\xi\in\hat{\Gamma}$. Indeed, since $V(\XX)= \{\xx_1,\dots,\xx_n\}$, and $A(\XX)=\{\ee_1,\overline{\ee}_1,\dots,\ee_l,\overline{\ee}_l\}$ the vector space $l^2(\XX_{\theta_\xi})$ is of dimension $n+l$ for every $\xi\in\hat{\Gamma}$. One sets
 $\I:\HH\to L^2(\T^d;\C^{n+l})$ by
 \begin{equation*}
 \[\I \varphi\](\xi)_j =\begin{cases}
\mg (\xx_j)^{\frac12}\varphi(\xi,\xx_j)&\text{ for }1\leq j \leq n\\
\mg (\ee_{j-n})^\frac12 \varphi(\xi,\ee_{j-n})&\text{ for }n+1\leq j \leq n+l\ ,
 \end{cases}
 \end{equation*}
 for every $\varphi\in\Trigpol(\HH)$ and every $\xi\in\T^{d}$. We denote by $\Trigpol(\T^d;\C^{n+l})$ the subspace of $L^2(\T^d;\C^{n+l})$ composed by functions $\varphi$ that admits
 \begin{equation*}
 \varphi(\xi)=\sum_{\mu\in\Z^d}e^{2\pi i\xi\cdot\mu}\varphi_\mu\text{ ; with }0\ne\varphi_\mu\in\C^{n+l} \text{ for only finitely many }\mu\ .
 \end{equation*} 
 It is a conveniently dense space in $L^2(\T^d;\C^{n+l})$ and coincides with $\I\U(C_c(X))$. The adjoint of $\I$ is given for $u\in\Trigpol(\T^d;\C^{n+l})$, $\xx\in V(\XX)$ and $\ee\in A^+(\XX)$ by 
 \begin{equation*}
\[\I^* u\](\xi,\xx)=\mg (\xx)^{-\frac12}u(\xi)_{\imath(\xx)}\text{ and }\[\I^* u\](\xi,\ee)=\mg (\ee)^{-\frac12}u(\xi)_{\imath(\ee)+n}\ .
 \end{equation*}
To have $\[\I^* u\](\xi,\overline{\ee})=-\overline{\theta_\xi(\ee)}\[\I^* u\](\xi,\ee)$ we need to set for $\ee\in A^-(\XX)$
\begin{equation*}
\[\I^* u\](\xi,\ee)=-\theta_\xi(\ee)\mg(\ee)^{-\frac{1}2} u(\xi)_{\imath(\ee)+n}\ .
\end{equation*}
Clearly $\I$ is unitary, the only non trivial fact being for $\ee\in A^-(\XX)$ that
\begin{equation*}
\[\I^*\I\varphi\](\xi,\ee)=-\theta_\xi(\ee)\mg(\ee)^{-\frac{1}{2}}(\I\varphi)(\xi)_{\imath(\ee)+n}=-\theta_\xi(\ee)\varphi(\xi,\overline{\ee})=\varphi(\xi,\ee)\ .
\end{equation*}

We can now state the main result of this section. We use the notation $\delta_{j\ell}\equiv\delta(j,\ell)$ for the Kronecker delta.
\begin{Proposition}\label{h0}
	Let  $(X,\XX,\omega,\Gamma)$ be a topological crystal and let $\mg $ be a $\Gamma$-periodic measure on $X$.
	Let $R_\Gamma$ be a real $\Gamma$-periodic potential.
	Then $H_0:=D(X,\mg )+R_\Gamma$
	is unitarily equivalent to a matrix-valued multiplication operator in $L^2(\T^d;\C^{n+l})$
	defined by a real analytic function $h_0:\T^d\to M_{n+l}(\C)$. We can describe $h_0$ by blocks in the following way:
	
	For $1\leq j,\ell\leq n$ we have 
	\begin{equation*}
	h_0(\xi)_{j\ell}=R_\Gamma(\xx_j)\delta_{j\ell}
	\end{equation*}
	and for $n+1\leq j,\ell\leq n+l$ we have
	\begin{equation*}
	h_0(\xi)_{j\ell}=R_\Gamma(\ee_{j-n})\delta_{j\ell}\ .
	\end{equation*}
	Furthermore, for $1\leq j \leq n$ and  $n+1\leq\ell\leq n+l$  we have:
	\begin{equation}\label{dstarevaluado}
	h_0(\xi)_{j\ell}=\frac{\mg (\ee_{\ell-n})^{\frac12}}{\mg (\xx_j)^{\frac12}}
	\begin{cases}
	0&\text{ if } o(\ee_{\ell-n})\neq \xx_j\neq t(\ee_{\ell-n})\\
	-1&\text{ if } o(\ee_{\ell-n}) =\xx_j\neq t(\ee_{\ell-n})\\
	e^{-2\pi i\xi\cdot\eta(\ee_{\ell-n})}&\text{ if } o(\ee_{\ell-n})\neq \xx_j= t(\ee_{\ell-n})\\
	-1+e^{-2\pi i\xi\cdot\eta(\ee_{\ell-n})}&\text{ if } o(\ee_{\ell-n})= \xx_j= t(\ee_{\ell-n})\ ,
	\end{cases}
	\end{equation}
	and
	\begin{equation}\label{devaluado}
	h_0(\xi)_{\ell j}=\frac{\mg (\ee_{\ell-n})^{\frac12}}{\mg (\xx_j)^{\frac12}}
	\begin{cases}
	0&\text{ if } o(\ee_{\ell-n})\neq \xx_j\neq t(\ee_{\ell-n})\\
	-1&\text{ if } o(\ee_{\ell-n}) =\xx_j\neq t(\ee_{\ell-n})\\
	e^{2\pi i\xi\cdot\eta(\ee_{\ell-n})}&\text{ if } o(\ee_{\ell-n})\neq \xx_j= t(\ee_{\ell-n})\\
	-1+e^{2\pi i\xi\cdot\eta(\ee_{\ell-n})}&\text{ if } o(\ee_{\ell-n})= \xx_j= t(\ee_{\ell-n})\ .
	\end{cases}
	\end{equation}
\end{Proposition}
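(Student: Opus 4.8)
\emph{Proof proposal.} The plan is to produce the asserted unitary equivalence by composing the two unitaries already at hand, $\U$ from \cref{Uunitary} and $\I$, and then to read off the matrix of the resulting fibered operator. By \cref{lemmadecom} we have $\U H_0\U^{*}=\int_{\hat{\Gamma}}^{\oplus}\d\xi\,(D_{\theta_\xi}(\XX,\mg)+R_\Gamma)$, so it remains only to conjugate this direct integral by $\I$. Under the identification $\hat{\Gamma}\cong\T^{d}$ (with $\xi(\mu)=e^{2\pi i\,\xi\cdot\mu}$) the map $\I$ acts fibrewise, being for each fixed $\xi$ the unitary $l^2(\XX_{\theta_\xi},\mg)\to\C^{n+l}$ given by the weights $\mg(\xx_j)^{1/2}$ and $\mg(\ee_{j-n})^{1/2}$; hence the conjugate of a decomposable operator remains decomposable. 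Thus $\I\U H_0\U^{*}\I^{*}$ is multiplication by the matrix $h_0(\xi)$ obtained by expressing $D_{\theta_\xi}(\XX,\mg)+R_\Gamma$ in the coordinates of $\I$, and the whole task reduces to computing its $(n+l)\times(n+l)$ entries. Real analyticity of $h_0$ will be immediate once this is done, since every entry will turn out to be a finite sum of exponentials $e^{2\pi i\,\xi\cdot\mu}$, hence a trigonometric polynomial in $\xi$.

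For the entries I would first exploit the block structure coming from $l^2(\XX_{\theta_\xi},\mg)=l^2_0(\XX,\mg)\oplus l^2_1(\XX_{\theta_\xi},\mg)$. Because $d_{\theta_\xi}$ raises and $d^*_{\theta_\xi}$ lowers the cochain degree, the operator $D_{\theta_\xi}(\XX,\mg)=d_{\theta_\xi}+d^*_{\theta_\xi}$ is purely off-diagonal in this splitting; consequently the only contributions to the two diagonal blocks ($1\le j,\ell\le n$ and $n+1\le j,\ell\le n+l$) come from the multiplication operator $R_\Gamma$, whose weights cancel under conjugation by $\I$ and immediately yield $h_0(\xi)_{j\ell}=R_\Gamma(\xx_j)\delta_{j\ell}$ and $h_0(\xi)_{j\ell}=R_\Gamma(\ee_{j-n})\delta_{j\ell}$ respectively. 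The vertex-row/edge-column block is then exactly the matrix of $d^*_{\theta_\xi}$ and the edge-row/vertex-column block that of $d_{\theta_\xi}$, which is why \cref{dstarevaluado} and \cref{devaluado} are stated separately.

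To compute the $d^*_{\theta_\xi}$ block I would apply $\I^{*}$ to the $\ell$-th basis vector, feed the result into $d^*_{\theta_\xi}f(\xx)=-\sum_{\ee\in A_\xx}\tfrac{\mg(\ee)}{\mg(\xx)}f(\ee)$, and multiply the $\xx_j$-value by $\mg(\xx_j)^{1/2}$. The point is that the sum runs over oriented edges issuing from $\xx_j$, so the positive edge $\ee_{\ell-n}$ contributes precisely when $o(\ee_{\ell-n})=\xx_j$, and its reversal $\overline{\ee_{\ell-n}}$ contributes precisely when $t(\ee_{\ell-n})=\xx_j$. For the reversal the $A^-(\XX)$ branch of $\I^{*}$ inserts the factor $-\theta_\xi(\overline{\ee_{\ell-n}})=-e^{-2\pi i\,\xi\cdot\eta(\ee_{\ell-n})}$; after reducing all the $\mg$-weights to the common prefactor $\mg(\ee_{\ell-n})^{1/2}/\mg(\xx_j)^{1/2}$ one is left with the constant $-1$ from the origin term and $e^{-2\pi i\,\xi\cdot\eta(\ee_{\ell-n})}$ from the terminus term. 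The four cases of \cref{dstarevaluado} are then just the four incidence possibilities (neither, origin only, terminus only, or a loop with $o=t=\xx_j$, where both terms occur and add). The block \cref{devaluado} is obtained identically from $d_{\theta_\xi}f(\ee)=\theta_\xi(\ee)f(t(\ee))-f(o(\ee))$, where now the phase $\theta_\xi(\ee_{\ell-n})=e^{2\pi i\,\xi\cdot\eta(\ee_{\ell-n})}$ multiplies the terminus vertex, producing the conjugate exponential; this is consistent with $h_0(\xi)$ being Hermitian, as it must be since each fibre operator is self-adjoint.

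The computations are elementary, so the only genuine obstacle is bookkeeping: keeping the orientation decomposition $A(\XX)=A^+(\XX)\cup A^-(\XX)$, the twist $\theta_\xi$ built into $\I^{*}$ on $A^-(\XX)$, and the origin-versus-terminus distinction all mutually consistent, since it is precisely these that dictate where the phases $e^{\pm 2\pi i\,\xi\cdot\eta}$ appear and thereby separate \cref{dstarevaluado} from \cref{devaluado}. The loop case $o(\ee_{\ell-n})=\xx_j=t(\ee_{\ell-n})$ deserves a moment's care, as the single entry then receives both contributions and one must check that they add rather than replace one another.
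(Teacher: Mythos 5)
Your proposal is correct and follows essentially the same route as the paper: invoke \cref{lemmadecom}, conjugate the fibered operator by $\I$, and read off the matrix entries by applying the operator to basis vectors, with the four incidence cases (including the loop case, where the two contributions add) and the $A^{-}(\XX)$ twist in $\I^{*}$ accounting for the phases $e^{\pm 2\pi i\xi\cdot\eta(\ee_{\ell-n})}$ exactly as in the paper's computation.
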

\begin{proof}
Simple computations give for $u\in \Trigpol(\T^d;\C^{n+l})$, $\xi\in\T^d$ and $1\leq j \leq n$
\begin{equation}\label{dstarxi}
(\I D(\XX_{\theta_\xi},\mg )\I^*u)(\xi)_j=\!\!\!\sum_{\ee\in A^-_{\xx_j}}\!\frac{\mg (\ee)^\frac12}{\mg (\xx_j)^\frac12}e^{2\pi i\xi\cdot\eta(\ee)}u(\xi)_{\imath(\ee)+n}-\!\!\!\sum_{\ee\in A^+_{\xx_j}}\!\frac{\mg (\ee)^\frac12}{\mg (\xx_j)^\frac12}u(\xi)_{\imath(\ee)+n}
\end{equation}
while for $n+1\leq \ell \leq n+l$ we have
\begin{multline}\label{dxi}
(\I D(\XX_{\theta_\xi},\mg )\I^*u)(\xi)_\ell\\
=\frac{\mg (\ee_{\ell-n})^\frac12}{\mg (t(\ee_{\ell-n}))^\frac12}e^{2\pi i\xi\cdot\eta(\ee_{\ell-n})}u(\xi)_{\imath(t(\ee_{\ell-n}))}-\frac{\mg (\ee_{\ell-n})^\frac12}{\mg (o(\ee_{\ell-n}))^\frac12}u(\xi)_{\imath(o(\ee_{\ell-n}))}\ .
\end{multline}
Let $\delta_\ell$ denote an element of the canonical basis of $\C^{n+l}$. If we replace $u$ by the constant function $\delta_\ell$ in \cref{dstarxi} we get that it is non zero in two cases: if either $\ee_{\ell-n}$ or $\overline{\ee}_{\ell-n}$ are in $A_{\xx_j}$. In fact, $\ee_{\ell-n}\in A_{\xx_j}$ corresponds to the second line in \cref{dstarevaluado}, $\overline{\ee}_{\ell-n}\in A_{\xx_j}$ to the third line, where the fact that $\overline{\ee}_{\ell-n}\in A^-(\XX)$ explain the negative factor in the exponential. The fourth line corresponds to the case where $\ee_{\ell-n}$ is a loop over $\xx_j$. The same reasoning permit to derive \cref{devaluado} from \cref{dxi}.

It is easy to see that
\begin{equation*}
(\I R_\Gamma \I^* u)(\xi)_{j}=R_\Gamma(\xx_j)u(\xi)_j\text{ for } 1\leq j \leq n
\end{equation*}
and 
\begin{equation*}
(\I R_\Gamma \I^* u)(\xi)_{j}=R_\Gamma(\ee_{j-n})u(\xi)_j \text{ for } n+1\leq j \leq n+l\ .
\end{equation*}
Taking \cref{lemmadecom} into account one gets that $\I\U H_0\U^*\I^*=h_0$. 
\end{proof}
 
%-------------------------------------------------------------------------------------------------------
\section{Proof of the main theorem}\label{sec:proof}
%-------------------------------------------------------------------------------------------------------
In this section we provide the proof of \cref{principal}. We summarize first the abstract results obtained in \cite{PR16x}. For a suitable symbol $a:\T^d\times\Z^d\to M_k(\C)$, the corresponding \emph{toroidal pseudo-differential operator} $\Op(a)$ acting $u\in C^\infty(\T^d;\C^k)$ is given by
\begin{equation*}
\[\Op(a)u\](\xi):=\sum_{\mu\in\Z^d}e^{-2\pi i\xi\cdot\mu}a(\xi,\mu)\check{u}(\mu),
\end{equation*}
where $\check{u}$ stands for the inverse Fourier transform defined by 
\begin{equation*}
\[\F^*u\](\mu)\equiv\check{u}(\mu)=\int_{\T^d}\d\xi e^{2\pi i\xi\cdot\mu}u(\xi)\ .
\end{equation*}
In fact, we will mostly be interested in a very simple class of symbols. Let $\nu\in\Z^d$ and $b:\Z^d\to M_k(\C)$ going to zero at infinity be fixed. We set the symbol $b_\nu(\xi,\mu)=e^{2\pi i \xi\cdot\nu}b(\mu)$ and define $b^\dagger_\nu(\xi,\mu)=e^{-2\pi i\xi\cdot\nu}b(\mu+\nu)^*$. Then $\Op(b_\nu)\in\B(L^2(\T^d;\C^k))$ and they satisfy $\Op(b_\nu)^*=\Op(b_\nu^\dagger)$.

Let $\delta_j$ denotes an element of the canonical basis of $\Z^d$. For a function $c:\Z^d\to\R$ we define $\Delta_jf(\mu):=f(\mu+\delta_j)-f(\mu)$. Combining \cite[Theo. 5.7, Lemma 6.2, Lemma 6.3 \& Lemma 6.5]{PR16x} with the classical pertubative Mourre theory, see in particular \cite[Theo. 7.5.6.]{ABG96}, one gets:
\begin{Theorem}\label{abstracto}
Let $h_0:\T^d\to M_k(\C)$ be hermitian-valued and real analytic. Let $h$ be a self-adjoint operator on $L^2(\T^d;\C^k)$ such that the difference $h-h_0$ is a toroidal pseudo-differential operator with symbol $b$. Furthermore, assume that there exists a finite set $\FF$ indexing a family of functions $b(\mathfrak{f}):\T^d\times\Z^d\to M_k(\C)$, a set $\{\nu_{\mathfrak{f}}\}\subset\Z^d$ and $c:\Z^d\to \R$ such that $b$ can be written as
\begin{equation}\label{absdecompuesto}
b=\(\sum_{\mathfrak{f}\in\FF} b(\mathfrak{f})_{\nu_{\mathfrak{f}}}+b(\mathfrak{f})^\dagger_{\nu_{\mathfrak{f}}}\)+c\Id_k\ .
\end{equation}
Assume that each $b(\mathfrak{f})$ satisfies
\begin{equation}\label{condicionshort}
\int_{1}^{\infty}\d\lambda \sup_{\lambda<|\mu|<2\lambda}\lp \[b(\mathfrak{f})\](\mu)\rp<\infty\ ,
\end{equation}
and that $c$ satisfies $\lim_{|\mu|\to \infty}c(\mu)=0$, and for every $j\in\{1,\dots,d\}$
\begin{equation}\label{condicionlong}
\int_{1}^{\infty}\d{\lambda}\sup_{\lambda<|\mu|<2\lambda}|(\triangle_j c)(\mu)|<\infty\ .
\end{equation}
Then there exists a discrete set $\tau\subset\R$ such that for every closed interval $I\subset\R\backslash\tau$ the following assertions hold in I:
\begin{enumerate}
	\item $h_0$ has purely absolutely continuous spectrum,
	\item $h$ has not singular continuous spectrum and has at most a finite number of eigenvalues, each of finite multiplicity,
	\item if $c\equiv 0$ the local wave operators $W_{\pm}(h,h_0;I)$ exist and are asymptotically complete.
\end{enumerate}
\end{Theorem}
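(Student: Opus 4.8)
This statement is a repackaging of the abstract machinery of \cite{PR16x} and \cite{ABG96}, so the plan is to check that the structural hypotheses \eqref{absdecompuesto}--\eqref{condicionlong} are precisely those under which the free Mourre estimate of \cite[Theo.~5.7]{PR16x} can be pushed through the perturbative theorem \cite[Theo.~7.5.6]{ABG96}. First I would invoke the conjugate operator $A$ on $L^2(\T^d;\C^k)$ furnished by \cite{PR16x}: morally it is the position operator dual under $\F$ to multiplication by $\mu\in\Z^d$, adapted to the band structure of $h_0$, so that on the spectral subspace of each band $\lambda_m(\xi)$ the leading part of the commutator $[ih_0,A]$ acts as the squared gradient $|\nabla_\xi\lambda_m(\xi)|^2$. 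Then \cite[Theo.~5.7]{PR16x} yields a strict Mourre estimate $E_{h_0}(I)[ih_0,A]E_{h_0}(I)\ge\theta\,E_{h_0}(I)$ with $\theta>0$ on every compact $I$ disjoint from a set $\tau$ of critical energies, and asserts that $\tau$ is discrete. This discreteness is the one genuinely non-elementary ingredient: it uses that $h_0$ is real analytic, so the band functions $\lambda_1,\dots,\lambda_k$ are analytic on the compact torus and each is either constant or has only finitely many critical values, whence $\tau$ is the finite union of these.

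Next I would treat the perturbation $\Op(b)$. The decomposition \eqref{absdecompuesto} splits it into the short-range part $\sum_{\mathfrak{f}\in\FF}\bigl(\Op(b(\mathfrak{f})_{\nu_{\mathfrak{f}}})+\Op(b(\mathfrak{f})^\dagger_{\nu_{\mathfrak{f}}})\bigr)$ and the long-range scalar part $\Op(c\,\Id_k)$, and I would use \cite[Lemmas~6.2,~6.3~\&~6.5]{PR16x} to extract the two facts required by the perturbative theorem. The first is that $h_0\in C^\infty(A)$ and that $\Op(b)$ is $h_0$-relatively compact, so that $h$ and $h_0$ share their essential spectrum and the Mourre estimate for $h_0$ transfers to $h$, modulo a compact remainder, on every $I\subset\R\setminus\tau$. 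The second is the regularity $h\in C^{1,1}(A)$: commuting $\Op(b(\mathfrak{f})_{\nu_{\mathfrak{f}}})$ with $A$ essentially weights its symbol by $\mu$, so the Agmon--H\"ormander sum \eqref{condicionshort} is exactly the $C^{1,1}(A)$ modulus of the short-range terms, while for the scalar part the first commutator is controlled by the finite differences $\Delta_j c$, so the long-range integral \eqref{condicionlong} delivers $C^{1,1}(A)$ even though $c$ itself need not be summable.

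With a strict Mourre estimate for $h_0$, the transferred estimate for $h$, and the $C^{1,1}(A)$ regularity in place, conclusions~(1) and~(2) are immediate from \cite[Theo.~7.5.6]{ABG96}: the strict estimate gives pure absolute continuity of $h_0$ on $I$, and the limiting absorption principle it produces for $h$ forces the singular continuous spectrum of $h$ to be empty on $I$ and the eigenvalues there to be finite in number and of finite multiplicity, the finiteness of multiplicity being a direct consequence of the strict positivity $\theta>0$ through the virial theorem. For conclusion~(3), when $c\equiv0$ the perturbation reduces to the purely short-range operator $\sum_{\mathfrak{f}\in\FF}\bigl(\Op(b(\mathfrak{f})_{\nu_{\mathfrak{f}}})+\Op(b(\mathfrak{f})^\dagger_{\nu_{\mathfrak{f}}})\bigr)$, and \eqref{condicionshort} makes it short-range in the precise sense compatible with the limiting absorption principles of $h_0$ and $h$; the standard argument then yields existence and completeness of the local wave operators $W_\pm(h,h_0;I)$, and asymptotic completeness follows at once since the singular continuous spectrum is already known to vanish.

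The main difficulty I anticipate is not a single estimate but the bookkeeping that aligns \eqref{condicionshort} and \eqref{condicionlong} with the regularity classes of \cite{ABG96}: one must check carefully that commuting a symbol $b(\mathfrak{f})_{\nu_{\mathfrak{f}}}$ with $A$ produces exactly a $\mu$-weighted symbol, so that \eqref{condicionshort} is the $C^{1,1}(A)$ bound and simultaneously the short-range scattering condition, and that the long-range hypothesis on $\Delta_j c$ rather than on $c$ is enough for $C^{1,1}(A)$ yet too weak for genuine short-range scattering---which is the structural reason why~(3) must assume $c\equiv0$. The remaining hard content, the discreteness of the threshold set $\tau$ via the analyticity of $h_0$, is already packaged inside \cite[Theo.~5.7]{PR16x}.
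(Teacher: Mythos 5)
Your proposal is correct and follows essentially the same route as the paper: the paper offers no detailed argument for this theorem, stating only that it follows by combining \cite[Theo.~5.7, Lemma~6.2, Lemma~6.3 \& Lemma~6.5]{PR16x} with the perturbative Mourre theory of \cite[Theo.~7.5.6]{ABG96}, and your sketch is a faithful unpacking of exactly that combination (Mourre estimate away from the discrete threshold set coming from analyticity of the band functions, $C^{1,1}(A)$ regularity of the short-range and long-range parts via \cref{condicionshort} and \cref{condicionlong}, and the standard consequences for the spectrum and the local wave operators).
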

\begin{proof}[Proof of \cref{principal}]
By \cref{h0} we know that $\I\U H_0 \U^*\I^*=:h_0$ is a multiplication operator by a real analytic matrix valued function. We ought to show that the difference $\I\U(H- H_0) \U^*\I^*$ satisfies the hypothesis of \cref{abstracto}. For this, we set $\FF=A^+(\XX)\cup\{\ff_0,\ff_s\}$. In this proof $j$ will denote an entire number in $\[1,n\]$ while $\ell$ will be an entire number in $\[n+1,n+l\]$. We start by defining
\begin{equation*}
b(\ff_0)(\mu)_{j\ell}=\begin{cases}
\frac{\mg(\ee_{\ell-n})^\frac12}{\mg(\xx_j)^\frac12}-\frac{m(\mu\hat{\ee}_{\ell-n})^\frac12}{m(\mu\hat{\xx}_j)^\frac12}&\text{ if }o(\ee_{\ell-n})=\xx_j\\
0&\text{ if not,}
\end{cases}
\end{equation*}
and $\nu_{\ff_0}=\mathbf{0}$. For $\ee\in A^+(\XX)$ we define
\begin{equation*}
b(\ee)(\mu)_{j\ell}=\begin{cases}
\frac{m((\mu+\eta(\ee))\hat{\overline{\ee}})^\frac12}{m((\mu+\eta(\ee))\widehat{t(\ee)})^\frac12}-\frac{\mg(\ee)^\frac12}{\mg(t(\ee))^\frac12}&\text{ if }t(\ee)=\xx_j\ , \imath(\ee)=\ell-n\\
0&\text{ if not,}\end{cases}
\end{equation*}
 and $\nu_\ee=-\eta(\ee)$. We also set the diagonal valued symbol $b(\ff_s)$ by
\begin{align*}
b(\ff_s)(\mu)_{jj}=&{}\big(R_S(\mu\hat{\xx}_j)+R_L(\mu\hat{\xx}_j)-R_L(\mu\hat{\xx}_1)\big)/2\\
b(\ff_s)(\mu)_{\ell\ell}=&{}\big(R_S(\mu\hat{\ee}_{\ell-n})+R_L(\mu\hat{\ee}_{\ell-n})-R_L(\mu\hat{\xx}_1)\big)/2
\end{align*}
 with $\nu_{\ff_s}=\mathbf{0}$. Finally we set
 \begin{equation*}
c(\mu)=R_L(\mu\hat{\xx_1})\ .
 \end{equation*} 
 We claim that
 \begin{equation}\label{claim}
\I\U(H- H_0) \U^*\I^*=\sum_{\ff\in\FF}(\Op(b(\ff)_{\nu_\ff})+\Op(b(\ff)^\dagger_{\nu_\ff}))+\Op(c\Id_{n+l})\ .
 \end{equation}
 To show this, we first study $T_1=\I\U(\J D(X,m)\J^*-D(X,\mg)) \U^*\I^*$. We start by noticing that
\begin{align*}
\[(\J D(X,m)\J^*-D(X,\mg))f\](x)&=\sum_{\e\in A_x}\(\frac{\mg(\e)}{\mg(x)}-\frac{\mg(\e)^\frac12 m(\e)^\frac12}{\mg(x)^\frac12 m(x)^\frac12}\)f(\e)\ ,\\
\[(\J D(X,m)\J^*-D(X,\mg))f\](\e)&=\(\frac{\mg(t(\e))^\frac12 m(\e)^\frac12}{\mg(\e)^\frac12 m(t(\e))^\frac12}-1\)f(t(\e))\\
&\quad+\(1-\frac{\mg(o(\e))^\frac12 m(\e)^\frac12}{\mg(\e)^\frac12 m(o(\e))^\frac12}\)f(o(\e))\ .
\end{align*}
Then for $1\leq j \leq n$,
\begin{align}
\[T_1u\](\xi)_j\notag&={}\mg(\xx_j)^\frac12\sum_{\mu\in\Z^d}e^{-2\pi i\xi\cdot\mu}((\J D(X,m)\J^*-D(X,\mg))\U^*\J^*u)(\mu\hat{\xx}_j)\notag\\
&={}\mg(\xx_j)^\frac12\sum_{\mu\in\Z^d}e^{-2\pi i\xi\cdot\mu}\sum_{\e\in A_{\mu\hat{\xx}_j}}\!\!\(\frac{\mg(\e)}{\mg(\mu\hat{\xx}_j)}-\frac{\mg(\e)^\frac12 m(\e)^\frac12}{\mg(\mu\hat{\xx}_j)^\frac12 m(\mu\hat{\xx}_j)^\frac12}\)(\U^*\I^*u)(\e)\notag\\
&={}\sum_{\mu\in\Z^d}e^{-2\pi i\xi\cdot\mu}\[\sum_{\ee\in A^+_{\xx_j}}\(\frac{\mg(\ee)^\frac12}{\mg(\xx_j)^\frac12}-\frac{m(\mu\hat{\ee})^\frac12}{m(\mu\hat{\xx}_j)^\frac12}\)\check{u}(\mu)_{\imath(\ee)+n}\right.\notag\\
&\quad\left.-\sum_{\ee\in A^-_{\xx_j}}\(\frac{\mg(\ee)^\frac12}{\mg(\xx_j)^\frac12}-\frac{m(\mu\hat{\ee})^\frac12}{m(\mu\hat{\xx}_j)^\frac12}\)\int_{\T^d}\d\chi e^{2\pi i\chi\cdot\mu}e^{2\pi i\chi\cdot\eta(\ee)}u(\chi)_{\imath(\ee)+n}\]\notag\\
\begin{split}\label{T1}
&={}\sum_{\mu\in\Z^d}e^{-2\pi i\xi\cdot\mu}\hspace{-5pt}\sum_{\ee\in A^+_{\xx_j}}\hspace{-5pt}\(\frac{\mg(\ee)^\frac12}{\mg(\xx_j)^\frac12}-\frac{m(\mu\hat{\ee})^\frac12}{m(\mu\hat{\xx}_j)^\frac12}\)\check{u}(\mu)_{\imath(\ee)+n}\\
&\quad+\sum_{\mu\in\Z^d}e^{-2\pi i\xi\cdot\mu}\hspace{-5pt}\sum_{\ee\in A^-_{\xx_j}}\hspace{-5pt}\(\frac{m((\mu-\eta(\ee))\hat{\ee})^\frac12}{m((\mu-\eta(\ee))\hat{\xx}_j)^\frac12}-\frac{\mg(\ee)^\frac12}{\mg(\xx_j)^\frac12}\)e^{2\pi i \xi\cdot\eta(\ee)}\check{u}(\mu)_{\imath(\ee)+n}
\end{split}
\end{align}
while for $n<\ell\leq n+l$,
\begin{align}
&\[T_1u\](\xi)_\ell\\
&={}\mg(\ee_{\ell-n})^\frac12\sum_{\mu\in\Z^d}e^{-2\pi i\xi\cdot\mu}((\J D(X,m)\J^*-D(X,\mg))\U^*\J^*u)(\mu\hat{\ee}_{\ell-n})\notag\\
&={}\mg(\ee_{\ell-n})^\frac12\sum_{\mu\in\Z^d}e^{-2\pi i\xi\cdot\mu}\[\(\frac{\mg(t(\ee_{\ell-n}))^\frac12 m(\mu\hat{\ee}_{\ell-n})^\frac12}{\mg(\ee_{\ell-n})^\frac12 m(t(\mu\hat{\ee}_{\ell-n}))^\frac12}-1\)(\U^*\J^*u)(t(\mu\hat{\ee}_{\ell-n}))\right.\notag\\
&\hspace{140pt}\left.+\(1-\frac{\mg(o(\ee_{\ell-n}))^\frac12 m(\mu\hat{\ee}_{\ell-n})^\frac12}{\mg(\ee_{\ell-n})^\frac12 m(o(\mu\hat{\ee}_{\ell-n}))^\frac12}\)(\U^*\J^*u)(o(\mu\hat{\ee}_{\ell-n}))\]\notag\\
&={}\sum_{\mu\in\Z^d}e^{-2\pi i\xi\cdot\mu}\[\(\frac{m(\mu\hat{\ee}_{\ell-n})^\frac12}{m(t(\mu\hat{\ee}_{\ell-n}))^\frac12}-\frac{\mg(\ee_{\ell-n})^\frac12}{\mg(t(\ee_{\ell-n}))^\frac12}\)\hspace{-2pt}\int_{\T^d}\d\chi e^{2\pi i\chi\cdot\mu}e^{2\pi i\chi\cdot\eta(\ee_{\ell-n})}u(\chi)_{\imath(t(\ee_{\ell-n}))}\right.\notag\\
&\hspace{140pt}\left.+\(\frac{\mg(\ee_{\ell-n})^\frac12}{\mg(o(\ee_{\ell-n}))^\frac12}-\frac{m(\mu\hat{\ee}_{\ell-n})^\frac12}{m(o(\mu\hat{\ee}_{\ell-n}))^\frac12}\)\check{u}(\mu)_{\imath(o(\ee_{\ell-n}))}\]\notag\\
\begin{split}\label{T1j}
&=\hspace{-3pt}\sum_{\mu\in\Z^d}\hspace{-3pt}e^{-2\pi i\xi\cdot\mu}\hspace{-3pt}\(\hspace{-3pt}\frac{m((\mu-\eta(\ee_{\ell-n}))\hat{\ee}_{\ell-n})^\frac12}{m((\mu-\eta(\ee_{\ell-n}))t(\hat{\ee}_{\ell-n}))^\frac12}\hspace{-1pt}-\hspace{-1pt}\frac{\mg(\ee_{\ell-n})^\frac12}{\mg(t(\ee_{\ell-n}))^\frac12}\hspace{-3pt}\)\hspace{-3pt}e^{2\pi i\xi\cdot\eta(\ee_{\ell-n})}\check{u}(\mu)_{\imath(t(\ee_{\ell-n}))}\\
&\quad+\hspace{-3pt}\sum_{\mu\in\Z^d}e^{-2\pi i\xi\cdot\mu}\(\frac{\mg(\ee_{\ell-n})^\frac12}{\mg(o(\ee_{\ell-n}))^\frac12}-\frac{m(\mu\hat{\ee}_{\ell-n})^\frac12}{m(o(\mu\hat{\ee}_{\ell-n}))^\frac12}\)\check{u}(\mu)_{\imath(o(\ee_{\ell-n}))}\ .
\end{split}
\end{align}
We can now deduce that $\Op(b(\ff_0)_{\nu_{\ff_0}})$ corresponds to the first term of \cref{T1} while $\Op(b(\ff_0)^\dagger_{\nu_{\ff_0}})$ correspond to the second term of \cref{T1j}. 

For $\Op(b(\ee)_{\nu_{\ee}})$ we compute
\begin{align*}
&\[\Op(b(\ee)_{\nu_\ee})u\](\xi)_j\\
%&=\sum_{\mu\in\Z^d}e^{-2\pi i\xi\cdot\mu}\(\[b(\ee)_{\nu_\ee}\]\!(\xi,\mu)\,\check{u}(\mu)\)_j\\
&=\sum_{\mu\in\Z^d}e^{-2\pi i\xi\cdot\mu}\sum_{\ell=1}^{n+l}\[b(\ee)_{\nu_\ee}(\xi,\mu)\]_{j\ell}\check{u}(\mu)_\ell\\
&=\sum_{\mu\in\Z^d}e^{-2\pi i\xi\cdot\mu}\(\frac{m((\mu+\eta(\ee))\hat{\overline{\ee}})^\frac12}{m((\mu+\eta(\ee))\widehat{t(\ee)})^\frac12}-\frac{\mg(\ee)^\frac12}{\mg(t(\ee))^\frac12}\)e^{-2\pi i\xi\cdot\eta(\ee)}\delta(\imath(t(\ee)),j)\check{u}(\mu)_{\imath(\ee)+n}\\
&=\sum_{\mu\in\Z^d}e^{-2\pi i\xi\cdot\mu}\(\frac{m((\mu-\eta(\overline{\ee}))\hat{\overline{\ee}})^\frac12}{m((\mu-\eta(\overline{\ee}))\widehat{o(\overline{\ee})})^\frac12}-\frac{\mg(\overline{\ee})^\frac12}{\mg(o(\overline{\ee}))^\frac12}\)e^{2\pi i\xi\cdot\eta(\overline{\ee})}\delta(\imath(o(\overline{\ee})),j)\check{u}(\mu)_{\imath(\overline{\ee})+n}\ .
\end{align*}
Then we can see each $\Op(b(\ee)_{\nu_{\ee}})$ corresponds to a term of the sum over $A^-(\XX)$ in the second term of \cref{T1}. In order to see that $\Op(b(\ee)^\dagger_{\nu_{\ee}})$
corresponds to the remaining term, we need to notice that $\hat{\overline{\ee}}=\overline{-\eta(\ee)\hat{\ee}}$ and $-\eta(\ee)t(\hat{\ee})=\widehat{t(\ee)}$. Then we can compute
	\begin{align*}
b(\ee)^\dagger_{\nu_{\ee}}(\xi,\mu)_{\ell j}={}&e^{-2\pi i\xi\cdot\nu_\ee}b(\ee)(\mu+\nu_\ee)_{j\ell}\\
={}&e^{2\pi i\xi\cdot\eta(\ee)}\begin{cases}
\frac{m(\mu\hat{\overline{\ee}})^\frac12}{m(\mu\widehat{t(\ee)})^\frac12}-\frac{\mg(\ee)^\frac12}{\mg(t(\ee))^\frac12}&\text{ if }t(\ee)=\xx_j\ ,\imath(\ee)=\ell-n\\
0&\text{ if not}
\end{cases}\\
={}&e^{2\pi i\xi\cdot\eta(\ee)}\begin{cases}
\frac{m((\mu-\eta(\ee))\hat{\ee})^\frac12}{m((\mu-\eta(\ee))t(\hat{\ee}))^\frac12}-\frac{\mg(\ee)^\frac12}{\mg(t(\ee))^\frac12}&\text{ if }t(\ee)=\xx_j\ ,\imath(\ee)=\ell-n\\
0&\text{ if not.}
\end{cases}
	\end{align*}
It follows that
\begin{align*}
&\[\Op(b(\ee)^\dagger_{\nu_\ee})u\](\xi)_\ell\\
%&=\sum_{\mu\in\Z^d}e^{-2\pi i\xi\cdot\mu}\(\[b(\ee)^\dagger_{\nu_\ee}\](\xi,\mu)\check{u}(\mu)\)_\ell\\
&=\sum_{\mu\in\Z^d}e^{-2\pi i\xi\cdot\mu}\sum_{j=1}^{n+l}\[b(\ee)^\dagger_{\nu_\ee}(\xi,\mu)\]_{\ell j}\check{u}(\mu)_j\\
&=\sum_{\mu\in\Z^d}e^{-2\pi i\xi\cdot\mu}\(\frac{m((\mu-\eta(\ee))\hat{\ee})^\frac12}{m((\mu-\eta(\ee))t(\hat{\ee}))^\frac12}-\frac{\mg(\ee)^\frac12}{\mg(t(\ee))^\frac12}\)e^{2\pi i\xi\cdot\eta(\ee)}\delta(\imath(\ee),\ell-n)\check{u}(\mu)_{\imath(t(\ee))}\ .
\end{align*}
We can then conclude that
\begin{equation}\label{Dsimbolo}
T_1=\Op(b(\ff_0)_{\nu_{\ff_0}})+\Op(b(\ff_0)^\dagger_{\nu_{\ff_0}})+\sum_{\ee\in A^+(\XX)}\Op(b(\ee)_{\nu_{\ee}})+\Op(b(\ee)^\dagger_{\nu_{\ee}})\ .
\end{equation}
Noticing that $b(\ff_s)_{\nu_{\ff_s}}=b(\ff_s)_{\nu_{\ff_s}}^\dagger$, we get that
\begin{align*}
\big(b(\ff_s)_{\nu_{\ff_s}}(\xi,\mu)+b(\ff_s)_{\nu_{\ff_s}}^\dagger(\xi,\mu)+c(\mu)\Id_{n+l}\big)_{jj}={}&R_S(\mu\hat{\xx}_j)+R_L(\mu\hat{\xx}_j)\\
\big(b(\ff_s)_{\nu_{\ff_s}}(\xi,\mu)+b(\ff_s)_{\nu_{\ff_s}}^\dagger(\xi,\mu)+c(\mu)\Id_{n+l}\big)_{\ell\ell}={}&R_S(\mu\hat{\ee}_{\ell-n})+R_L(\mu\hat{\ee}_{\ell-n})\,
\end{align*}
and hence
\begin{equation}\label{Rsimbolo}
\I\U(R_S+R_L)\U^*\I^*=\Op(b(\ff_s)_{\nu_{\ff_s}})+\Op(b(\ff_s)^\dagger_{\nu_{\ff_s}})+\Op(c\Id_{n+l})\ .
\end{equation}
Taking into account \cref{Dsimbolo} and \cref{Rsimbolo}, we get \cref{claim}.

To see that each $b(\ee)$, with $\ee\in A^+(\XX)$, satisfies \cref{condicionshort} one needs to take into account \cref{mshortD} and the relation
\begin{equation*}
\lp b(\ee)(\mu)\rp=\left|\frac{m((\mu+\eta(\ee))\hat{\overline{\ee}})}{m((\mu+\eta(\ee))\widehat{t(\ee)})}-\frac{\mg(\ee)}{\mg(t(\ee))}\right|\times\left|\frac{m((\mu+\eta(\ee))\hat{\overline{\ee}})^\frac12}{m((\mu+\eta(\ee))\widehat{t(\ee)})^\frac12}+\frac{\mg(\ee)^\frac12}{\mg(t(\ee))^\frac12}\right|^{-1}\ .
\end{equation*}
Then one only needs to notice that the second term is uniformly bounded as a function of $\mu$. A similar argument applied to each entry of $b(\ff_0)$ shows that it also satisfies \cref{condicionshort}.

In order to treat $b(\ff_s)$ we will need a property of paths. Let $\alpha=\{\e_p\}_{p=1}^N$ be a path between $x$ and $y$ in $X$. This means that $o(\e_1)=x$, $t(\e_p)=o(\e_{p+1})$ and $t(\e_N)=y$. Then, for any potential $R$ on $X$ the following formula holds:
\begin{equation*}
R(y)-R(x)=\sum_{\e\in\alpha}\(R(t(\e))-R(\e)\)+\sum_{\e\in\alpha}\(R(\e)-R(o(\e))\)\ .
\end{equation*}
Let $1< j\leq n$ be fixed. Let us denote by $\alpha_j$ a fixed path between $x_1=\hat{\xx}_1$ and $x_j=\hat{\xx}_j$. By applying $\mu$ to each edge in $\alpha_j$ we get a path between $\mu x_1$ and $\mu x_j$. It follows that:
\begin{align*}
|2b(\ff_s)(\mu)_{jj}|={}&|R_S(\mu x_j)+R_L(\mu x_j)-R_L(\mu x_1)|\\
\leq{}&|R_S(\mu x_j)|+\sum_{\e\in\alpha_j}|R_L(t(\mu\e))-R_L(\mu\e)|+\sum_{\e\in\alpha_j}|R_L(\mu\e)-R_L(o(\mu\e))|\ .
\end{align*}
Then by combining \cref{potshort} and \cref{longGauss} we get that
\begin{equation*}
\int_1^\infty\d\lambda\sup_{\lambda<|\mu|<2\lambda}|b(\ff_s)(\mu)_{jj}|<\infty \text{ for }1\leq j\leq n\ .
\end{equation*}
For $n<\ell\leq n+l$ a similar argument holds. We need to fix a path $\alpha_\ell$ between $x_1$ and $o(\e_{\ell-n})$ and compute
\begin{align*}
|2b(\ff_s)(\mu)_{\ell\ell}|={}&|R_S(\mu \e_{\ell-n})+R_L(\mu \e_{\ell-n})-R_L(\mu x_1)|\\
={}&|R_S(\mu x_\ell)|+|R_L(\mu \e_{\ell-n})-R_L(\mu o(\e_{\ell-n}))|\\&\qquad+\sum_{\e\in\alpha_j}|R(t(\mu\e))-R(\mu\e)|+\sum_{\e\in\alpha_\ell}|R(\mu\e)-R(o(\mu\e))|\ .
\end{align*}
Finally, we only need to check that $c$ fulfills \cref{condicionlong}. For this we fix paths $\beta_j$ between $x_1$ and $\delta_jx_1$ for every $j\in\{1,\dots,d\}$. Then we have
\begin{align*}
|(\Delta_jc)(\mu)|={}&|c(\delta_j+\mu)-c(\mu)|\\
={}&|R_L((\delta_j+\mu)x_1)-R_L(\mu x_1)|\\
\leq{}&\sum_{\e\in\beta_j}|R_L(t(\mu\e))-R_L(\mu\e)|+\sum_{\e\in\beta_j}|R_L(\mu\e)-R_L(o(\mu\e))|\ .
\end{align*}
From this we can see that \cref{longGauss} implies \cref{condicionlong} finishing the proof.
\end{proof}

%-------------------------------------------------------------------------------------------------------
\section{Schrödinger operators acting on edges}\label{sec:laplacian}
%-------------------------------------------------------------------------------------------------------
In this section we turn our attention to the operator $\Delta_1(X,\mg)$. It acts on $l^2_1(X,\mg)$ by
\begin{equation*}
\[\Delta_1(X,\mg)f\](\e):=\[-d^*df\](\e)=\sum_{\e'\in A_{t(\e)}}\frac{\mg(\e')}{\mg(t(\e))}f(\e')-\sum_{\e'\in A_{o(\e)}}\frac{\mg(\e')}{\mg(o(\e))}f(\e')\ .
\end{equation*}
Given a periodic potential $R_\Gamma:E(X)\to\R$ we define the periodic Schrödinger operator acting on edges by
\begin{equation}\label{H0edges}
H_0:=-\Delta_1(X,\mg)+R_\Gamma\ .
\end{equation}
The perturbed operator is defined by choosing a non periodic measure $m$ that converges to $\mg$ at infinity. We still denote by $\J$ the restriction of $\J$ to an operator from $l^2_1(X,m)$ to $l^2_1(X,\mg)$. Then $H$ is defined by
\begin{equation}\label{Hedges}
H:=\J(-\Delta_1(X,m))\J^*+R\ ,
\end{equation}
where $R$ is a potential that converges to $R_\Gamma$ at infinity. 
\begin{Theorem}%\label{principaledges}
	Let $X$ be a topological crystal. Let $H_0$ and $H$ be defined by \cref{H0edges} and \cref{Hedges} respectively. Assume that $m$ satisfies
	\begin{equation}\label{measureshort}
	\int_{1}^{\infty}\d\lambda \sup_{\lambda<|\[\e\]|<2\lambda}\left|\frac{m(\e)}{m(o(\e))}-\frac{\mg (\e)}{\mg (o(\e))}\right|<\infty\ .
	\end{equation}
	Assume also that the difference $R-R_\Gamma$ is equal to $R_S+R_L$ which satisfy
	\begin{equation}%\label{Rshipotesis}
	\int_{1}^{\infty}\d\lambda \sup_{\lambda<|\[x\]|<2\lambda}\left|R_S(x)\right|<\infty,
	\end{equation}
	and
	\begin{equation}\label{Rlhipotesis}
	R_L(\e)\xrightarrow{\e\to\infty}0, \quad \hbox{ and } \quad
	\int_{1}^{\infty}\d\lambda \sup_{\lambda<|\[x\]|<2\lambda}\sup_{\e,\e'\in A_x}\big|R_L(\e)-R_L(\e')\big|<\infty\ .
	\end{equation}
	Then, there exists a discrete set $\tau\subset\R$ such that for every closed interval $I\subset\R\backslash\tau$ the following assertions hold in $I$:
	\begin{enumerate}
	\item $H_0$ has purely absolutely continuous spectrum,
	\item $H$ has no singular continuous spectrum and has at most a finite number of eigenvalues, each of finite multiplicity,
	\item if $R_L\equiv0$ the local wave operators $W_{\pm}(H,H_0;I)$ exist and are asymptotically complete.
	\end{enumerate}
\end{Theorem}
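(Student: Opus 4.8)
The plan is to mirror exactly the strategy used for the Gauss--Bonnet operator in the proof of \cref{principal}, adapting the bookkeeping to the scalar (edge-only) setting. Since $-\Delta_1(X,\mg)=d d^*$ acts on $l^2_1(X,\mg)$ alone, the first step is to produce a constant-fiber decomposition analogous to \cref{h0}: conjugating by $\U$ and then by (the restriction to edges of) $\I$, I would obtain a unitary equivalence between $H_0=-\Delta_1(X,\mg)+R_\Gamma$ and a matrix-valued multiplication operator $h_0:\T^d\to M_l(\C)$, where now the fibre dimension is $l$ rather than $n+l$. Concretely, $h_0(\xi)$ is the Gram-type matrix built from $d_{\theta_\xi}d_{\theta_\xi}^*$ on the finite graph $\XX$, whose entries are trigonometric polynomials in $e^{2\pi i\xi\cdot\eta(\ee)}$ coming from the formula for $\Delta_1(X_\theta,\mg)$ established in the first lemma; hence $h_0$ is hermitian-valued and real analytic, so the hypotheses of \cref{abstracto} on $h_0$ are met.

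The second step is to show that the perturbation $\I\U(H-H_0)\U^*\I^*$ is a toroidal pseudo-differential operator whose symbol $b$ decomposes as in \cref{absdecompuesto}. I would split $H-H_0 = (\J(-\Delta_1(X,m))\J^*+\Delta_1(X,\mg)) + (R_S+R_L)$. For the metric part $T_1$, expanding $-\Delta_1(X,m)=dd^*$ in the two ratios $m(\e')/m(t(\e))$ and $m(\e')/m(o(\e))$ and conjugating by $\U^*\J^*$ and $\I^*$, each adjacency $\ee'\in A_{t(\ee)}\cup A_{o(\ee)}$ in $\XX$ contributes a symbol of the form $b(\ff)_{\nu_\ff}$ with frequency $\nu_\ff$ dictated by the relevant index $\eta$, and the self-adjointness of $T_1$ pairs these into $b(\ff)+b(\ff)^\dagger$ terms exactly as in \cref{Dsimbolo}. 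For the potential part I would set $c(\mu)=R_L(\mu\hat{\xx}_1)$ as the scalar long-range tail and absorb $R_S$ together with the differences $R_L(\mu\hat\ee)-R_L(\mu\hat{\xx}_1)$ into a diagonal short-range symbol $b(\ff_s)$, precisely paralleling \cref{Rsimbolo}.

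The third and final step is to verify the decay conditions \cref{condicionshort} and \cref{condicionlong}. The short-range estimate for the metric symbols follows from \cref{measureshort} via the algebraic identity $a-b=(a^2-b^2)(a+b)^{-1}$ used in the proof of \cref{principal}, the denominator being uniformly bounded below since $\deg_m$ is bounded. For the diagonal symbol $b(\ff_s)$ and for $c$, I would invoke the same telescoping-along-paths argument: fixing a path $\alpha_\ell$ in $X$ joining $\hat{\xx}_1$ to a chosen endpoint and a path $\beta_j$ joining $\hat{\xx}_1$ to $\delta_j\hat{\xx}_1$, the differences $R_L(\mu x)-R_L(\mu x')$ telescope into sums of edge-consecutive increments, which are controlled by \cref{Rlhipotesis}. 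Here lies the one genuine novelty: whereas in \cref{principal} the long-range control came from differences $|R_L(\e)-R_L(o(\e))|$ between an edge and its origin vertex, now $R_L$ lives only on edges, so the relevant increments are of the form $|R_L(\e)-R_L(\e')|$ for $\e,\e'$ sharing a vertex --- which is exactly the quantity controlled by the second condition in \cref{Rlhipotesis}. I expect the main obstacle to be organizing this path-telescoping so that every increment appearing is genuinely of the admissible form $R_L(\e)-R_L(\e')$ with $\e,\e'\in A_x$ for a common vertex $x$; once that combinatorial bookkeeping is set up, the conclusions (1)--(3) follow verbatim from \cref{abstracto}, with asymptotic completeness guaranteed when $R_L\equiv 0$ forces $c\equiv 0$.
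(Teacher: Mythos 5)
Your proposal follows essentially the same route as the paper: the same constant-fiber decomposition restricted to $\HH_1$, the same identification of the metric perturbation as a sum of paired symbols indexed by the ways two edges of $\XX$ can meet (with the $(f-f_0)$-type factorization and boundedness of $\deg_m$ giving the short-range bound from \cref{measureshort}), and the same path-telescoping of edge-to-edge increments for the long-range potential. The one slip is your formula $c(\mu)=R_L(\mu\hat{\xx}_1)$, which is ill-typed since $R_L$ lives on edges; the paper takes $c(\mu)=R_L(\mu\hat{\ee}_1)$ and telescopes along a path from $t(\hat{\ee}_1)$ to $o(\hat{\ee}_j)$, producing exactly the increments $R_L(\e)-R_L(\e')$ with $\e,\e'$ sharing a vertex that you correctly anticipate in your final paragraph, so there is no genuine gap.
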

\begin{proof}
The proof goes in the lines of the proof of \cref{principal}. We start by noticing that
\begin{align*}
\hspace{-5pt}\[(\Delta_1(X,\mg)\hspace{-1pt}-\hspace{-1pt}\J\Delta_1(X,m)\J^*)f\](\e)&=\hspace{-8pt}\sum_{\e'\in A_{t(\e)}}\hspace{-7pt}\(\frac{\mg(\e')}{\mg(t(\e))}-\frac{\mg(\e')^\frac12m(\e)^\frac12m(\e')^\frac12}{\mg(\e)^\frac12m(t(\e))}\)\!f(\e')\\
&+\hspace{-8pt}\sum_{\e'\in A_{o(\e)}}\hspace{-7pt}\(\frac{\mg(\e')^\frac12m(\e)^\frac12m(\e')^\frac12}{\mg(\e)^\frac12m(o(\e))}-\frac{\mg(\e')}{\mg(o(\e))}\)f(\e')\ .
\end{align*}
We set for convenience $T_2=\I\U(\Delta_1(X,\mg)-\J\Delta_1(X,m)\J^*)\U^*\I^*$, where $\I$ stands for the restriction $\I:\HH_1\to L^2(\T^d;\C^l)$. We can compute for $1\leq \ell\leq l$:
\begin{align}
&\[T_2u\](\xi)_\ell=\mg(\ee_\ell )^\frac12\sum_{\mu\in\Z^d}e^{-2\pi i\xi\cdot\mu}\big((\Delta_1(X,\mg)-\J\Delta_1(X,m)\J^*)\U^*\J^*u\big)(\mu\hat{\ee}_\ell )\nonumber\\
&=\sum_{\mu\in\Z^d}e^{-2\pi i\xi\cdot\mu}\[\sum_{\e\in A_{t(\mu\hat{\ee}_\ell )}}\hspace{-5pt}\(\frac{\mg(\e)\mg(\ee_\ell )^\frac12}{\mg(t(\ee_\ell ))}-\frac{\mg(\e)^\frac12m(\mu\hat{\ee}_\ell )^\frac12m(\e)^\frac12}{m( t(\mu\hat{\ee}_\ell ))}\)(\U^*\J^*u)(\e)\right.\nonumber\\
&\hspace{82pt}\left.+\sum_{\e\in A_{o(\mu\hat{\ee}_\ell )}}\(\frac{\mg(\e)^\frac12m(\mu\hat{\ee}_\ell )^\frac12m(\e)^\frac12}{m(o(\mu\hat{\ee}_\ell ))}-\frac{\mg(\e)^\frac12\mg(\ee_\ell )^\frac12}{\mg(o(\mu\hat{\ee}_\ell ))}\)(\U^*\J^*u)(\e)\]\nonumber\\
&=\sum_{\mu\in\Z^d}e^{-2\pi i\xi\cdot\mu}\[\sum_{\ee\in A_{t(\ee_\ell )}}\(\frac{\mg(\ee)^\frac12\mg(\ee_\ell )^\frac12}{\mg(t(\ee_\ell ))}-\frac{m(\mu\hat{\ee}_\ell )^\frac12m((\mu+\eta(\ee_\ell))\hat{\ee})^\frac12}{m( t(\mu\hat{\ee}_\ell ))}\)\nonumber\right.\\
&\hspace{82pt}\times\int_{\T^d}\d\chi e^{2\pi i\chi\cdot(\mu+\eta(\ee_\ell ))}u(\chi)_{\imath(\ee)}\begin{cases}
1&\text{ if }\ee\in A^+(\XX)\\
-\theta_{\xi}(\ee)&\text{ if }\ee\in A^-(\XX)
\end{cases}\nonumber\\
&\hspace{82pt}+\sum_{\ee\in A_{o(\ee_\ell )}}\(\frac{m(\mu\hat{\ee}_\ell )^\frac12m(\mu\hat{\ee})^\frac12}{m(\mu o(\hat{\ee}_\ell ))}-\frac{\mg(\ee)^\frac12\mg(\ee_\ell )^\frac12}{\mg(o(\ee_\ell ))}\)\nonumber\\
&\hspace{82pt}\times\left.\int_{\T^d}\d\chi e^{2\pi i\chi\cdot\mu}u(\chi)_{\imath(\ee)}\begin{cases}
1&\text{ if }\ee\in A^+(\XX)\\
-\theta_{\xi}(\ee)&\text{ if }\ee\in A^-(\XX)
\end{cases}\vphantom{\sum_{\ee\in A_{t(\ee_\ell )}}\(\frac{\mg(\ee)^\frac12\mg(\ee_\ell )^\frac12}{\mg(t(\ee_\ell ))}-\frac{m(\mu\hat{\ee}_\ell )^\frac12m(\mu\hat{\ee})^\frac12}{m( t(\mu\hat{\ee}_\ell ))}\)}\]\nonumber\\
&=\sum_{\mu\in\Z^d}e^{-2\pi i\xi\cdot\mu}\[\sum_{\ee\in A^-_{t(\ee_\ell )}}\hspace{-7pt}\(\frac{m((\mu-\eta(\ee_\ell )-\eta(\ee))\hat{\ee}_\ell )^\frac12m((\mu-\eta(\ee))\hat{\ee})^\frac12}{m((\mu-\eta(\ee_\ell )-\eta(\ee))t(\hat{\ee}_\ell ))}\right.\right.\nonumber\\
&\left.\hspace{180pt}-\frac{\mg(\ee)^\frac12\mg(\ee_\ell )^\frac12}{\mg(t(\ee_\ell ))}\)e^{2\pi i\xi\cdot(\eta(\ee_\ell )+\eta(\ee))}\check{u}(\mu)_{\imath(\ee)}\label{casoa}\\
&\quad+\hspace{-5pt}\sum_{\ee\in A^+_{t(\ee_\ell )}}\hspace{-5pt}\(\frac{\mg(\ee)^\frac12\mg(\ee_\ell )^\frac12}{\mg(t(\ee_\ell ))}-\frac{m((\mu-\eta(\ee_\ell ))\hat{\ee}_\ell )^\frac12m(\mu\hat{\ee})^\frac12}{m( (\mu-\eta(\ee_\ell ))t(\hat{\ee}_\ell ))}\)e^{2\pi i\xi\cdot\eta(\ee_\ell )}\check{u}(\mu)_{\imath(\ee)}\label{casob}\\
&\quad+\hspace{-5pt}\sum_{\ee\in A^-_{o(\ee_\ell )}}\hspace{-5pt}\(\frac{\mg(\ee)^\frac12\mg(\ee_\ell )^\frac12}{\mg(o(\ee_\ell ))}-\frac{m((\mu-\eta(\ee))\hat{\ee}_\ell )^\frac12m((\mu-\eta(\ee))\hat{\ee})^\frac12}{m((\mu-\eta(\ee)) o(\hat{\ee}_\ell ))}\)e^{2\pi i\xi\cdot\eta(\ee)}\check{u}(\mu)_{\imath(\ee)}\label{casoc}\\
&\quad\left.+\hspace{-5pt}\sum_{\ee\in A^+_{o(\ee_\ell )}}\hspace{-5pt}\(\frac{m(\mu\hat{\ee}_\ell )^\frac12m(\mu\hat{\ee})^\frac12}{m(\mu o(\hat{\ee}_\ell ))}-\frac{\mg(\ee)^\frac12\mg(\ee_\ell )^\frac12}{\mg(o(\ee_\ell ))}\)\check{u}(\mu)_{\imath(\ee)}\]\label{casod}
\end{align}
We need to define a symbol for each one of the lines \cref{casoa} to \cref{casod}. Each one correspond to a different way in which two oriented edges can intersect. Considering the $\delta$ function also defined for vertices by $\delta(\xx,\xx')=\delta(\imath(\xx),\imath(\xx'))$ we can, for every pair $(\ee_j,\ee_\ell)$, define matrices with a single entry in the position $j\ell$ by:
\begin{align*}
\[a(\ee_j,\ee_\ell)\](\mu)_{j\ell}=&\(\frac{m((\mu-\eta(\ee_\ell)+\eta(\ee_j))\hat{\ee}_\ell)^\frac12m((\mu+\eta(\ee_j))\hat{\overline{\ee}}_j)^\frac12}{m((\mu-\eta(\ee_\ell)+\eta(\ee_j))t(\hat{\ee}_\ell))}\right.\nonumber\\
&\hspace{176pt}\left.-\frac{\mg(\ee_j)^\frac12\mg(\ee_\ell)^\frac12}{\mg(t(\ee_\ell))}\)\delta\big(t(\ee_j),t(\ee_\ell)\big)%\label{simboloa}
\\
\[b(\ee_j,\ee_\ell)\](\mu)_{j\ell}=&\(\frac{\mg(\ee_j)^\frac12\mg(\ee_\ell)^\frac12}{\mg(t(\ee_\ell))}-\frac{m((\mu-\eta(\ee_\ell))\hat{\ee}_\ell)^\frac12m(\mu\hat{\ee}_j)^\frac12}{m((\mu-\eta(\ee_\ell))t(\hat{\ee}_\ell))}\)\delta\big(o(\ee_j),t(\ee_\ell)\big)%\label{simbolob}
\\
\[c(\ee_j,\ee_\ell)\](\mu)_{j\ell}=&\(\hspace{-2pt}\frac{\mg(\ee_j)^\frac12\mg(\ee_\ell)^\frac12}{\mg(o(\ee_\ell))}\hspace{-1pt}-\hspace{-1pt}\frac{m((\mu+\eta(\ee_j))\hat{\ee}_\ell)^\frac12m((\mu+\eta(\ee_j))\hat{\overline{\ee}}_j)^\frac12}{m((\mu+\eta(\ee_j))o(\hat{\ee}_\ell))}\hspace{-2pt}\)\hspace{-3pt}\delta\big(t(\ee_j),o(\ee_\ell)\big)%\label{simboloc}
\\
\[d(\ee_j,\ee_\ell)\](\mu)_{j\ell}=&\(\frac{m(\mu\hat{\ee}_j)^\frac12m(\mu\hat{\ee}_\ell)^\frac12}{m(\mu o(\hat{\ee}_\ell))}-\frac{\mg(\ee_j)^\frac12\mg(\ee_\ell)^\frac12}{\mg(o(\ee_\ell))}\)\delta\big(o(\ee_j),o(\ee_\ell)\big)\ .%\label{simbolod}
\end{align*}
Furthermore we set
\begin{align*}
A(\ee_j,\ee_\ell)=&\eta(\ee_\ell)-\eta(\ee_j) &B(\ee_j,\ee_\ell)=&\eta(\ee_\ell)\\
C(\ee_j,\ee_\ell)=&-\eta(\ee_j) &D(\ee_j,\ee_\ell)=&\mathbf{0}\ .
\end{align*}
We can see that
\begin{multline}\label{t2decompuesto}
T_2=\sum_{\ee_j,\ee_\ell\in A^+(\XX)}\[\Op(a(\ee_j,\ee_\ell)_{A(\ee_j,\ee_\ell)})+\Op(b(\ee_j,\ee_\ell)_{B(\ee_j,\ee_\ell)})\right.\\\left.+\Op(c(\ee_j,\ee_\ell)_{C(\ee_j,\ee_\ell)})+\Op(d(\ee_j,\ee_\ell)_{D(\ee_j,\ee_\ell)})\]\ .
\end{multline}
To verify that \cref{t2decompuesto} is of the form \cref{absdecompuesto} we need only to verify
\begin{align}
a(\ee_j,\ee_\ell)_{A(\ee_j,\ee_\ell)}^\dagger=&a(\ee_\ell,\ee_j)_{A(\ee_\ell,\ee_j)}\label{adagger}\\
b(\ee_j,\ee_\ell)_{B(\ee_j,\ee_\ell)}^\dagger=&c(\ee_\ell,\ee_j)_{C(\ee_\ell,\ee_j)}\label{bdagger}\\
d(\ee_j,\ee_\ell)_{D(\ee_j,\ee_\ell)}^\dagger=&d(\ee_\ell,\ee_j)_{D(\ee_\ell,\ee_j)}\label{ddagger}\ .
\end{align}
Indeed, keeping in mind the equality $\hat{\overline{\ee}}=\overline{-\eta(\ee)\hat{\ee}}$, one has
\begin{align*}
&\[a(\ee_j,\ee_\ell)_{A(\ee_j,\ee_\ell)}^\dagger\](\xi,\mu)_{\ell j}=\\
&=e^{-2\pi i\xi\cdot A(\ee_j,\ee_\ell)}a(\ee_j,\ee_\ell)_{j\ell}(\mu+ A(\ee_j,\ee_\ell))\\
&=e^{-2\pi i\xi\cdot (\eta(\ee_\ell)-\eta(\ee_j))}\(\frac{m(\mu\hat{\ee}_\ell)^\frac12m((\mu+\eta(\ee_\ell))\hat{\overline{\ee}}_j)^\frac12}{m(\mu t(\hat{\ee}_\ell))}-\frac{\mg(\ee_\ell)^\frac12\mg(\ee_j)^\frac12}{\mg(t(\ee_\ell))}\)\delta\big(t(\ee_j),t(\ee_\ell)\big)\\
&=e^{2\pi i\xi\cdot (\eta(\ee_j)-\eta(\ee_\ell))}\hspace{-2pt}\(\hspace{-2pt}\frac{m(\mu(\overline{\eta(\ee_\ell)\hat{\overline{\ee}}_\ell}))^\frac12m((\mu+\eta(\ee_\ell))(\overline{-\eta(\ee_j)\hat{\ee}_j}))^\frac12}{m((\mu+\eta(\ee_\ell))\widehat{t(\ee_\ell)})}\right.\\
&\hspace{259pt}\left.-\frac{\mg(\ee_j)^\frac12\mg(\ee_\ell)^\frac12}{\mg(t(\ee_j))}\hspace{-2pt}\)\hspace{-2pt}\delta\big(t(\ee_\ell),t(\ee_j)\big)\\
&=e^{2\pi i\xi\cdot (\eta(\ee_j)-\eta(\ee_\ell))}\(\frac{m((\mu+\eta(\ee_\ell))\hat{\overline{\ee}}_\ell)^\frac12m((\mu-\eta(\ee_j)+\eta(\ee_\ell))\hat{\ee}_j)^\frac12}{m((\mu+\eta(\ee_\ell)-\eta(\ee_j))t(\hat{\ee}_j))}\right.\\
&\hspace{259pt}\left.-\frac{\mg(\ee_j)^\frac12\mg(\ee_\ell)^\frac12}{\mg(t(\ee_j))}\hspace{-2pt}\)\hspace{-2pt}\delta\big(t(\ee_\ell),t(\ee_j)\big)\\
&=e^{2\pi i\xi\cdot A(\ee_\ell,\ee_j)}a(\ee_\ell,\ee_j)_{\ell j}(\mu)=\[a(\ee_\ell,\ee_j)_{A(\ee_\ell,\ee_j)}\](\xi,\mu)_{\ell j}\ ,
\end{align*}
getting \cref{adagger}. Similar computations give \cref{bdagger} while \cref{ddagger} is straightforward. 

It remains to show that each symbol %\cref{simboloa,simbolob,simboloc,simbolod}
fulfill the decay \cref{condicionshort}. We only do it for the symbol $d$. Let us fix $\ee_j$ and $\ee_\ell$ and we set for any $\mu\in\Z^d$: $f(\mu):=\frac{m(\mu\hat{\ee}_j)}{m(\mu o(\hat{\ee}_j)}$, $g(\mu):=\frac{m(\mu\hat{\ee}_\ell)}{m(\mu o(\hat{\ee}_\ell)}$, $f_0=\frac{\mg(\ee_j)}{\mg(o(\ee_j))}$ and $g_0=\frac{\mg(\ee_\ell)}{\mg(o(\ee_\ell))}$. 
Then one deduces that
\begin{align*}
\lp \[d(\ee_j,\ee_\ell)\](\mu)\rp
&=\left|f(\mu)^\frac12 g(\mu)^\frac12 -f_0^\frac12 g_0^\frac12\right|\\
&=\left|\big(f(\mu)-f_0\big)\frac{g(\mu)^\frac12}{f(\mu)^\frac12+f_0^\frac12}
+\big(g(\mu)-g_0\big)\frac{f_0^\frac12}{g(\mu)^\frac12+g_0^\frac12}\right|\ .
\end{align*}
Since the functions
$\frac{g^\frac12}{f^\frac12+f_0^\frac12}$ and $\frac{f_0^\frac12}{g^\frac12+g_0^\frac12}$ are bounded on $\Z^d$  we finally obtain
\begin{align*}
\sup_{\lambda<|\mu|<2\lambda }\lp \[d(\ee_j,\ee_\ell)\](\mu)\rp
& \leq C \Big( \sup_{\lambda<|\mu|<2\lambda }|f(\mu)-f_0|+\sup_{\lambda<|\mu|<2\lambda }|g(\mu)-g_0|\Big) \\
& \leq C \Big( \sup_{\lambda<|\mu|<2\lambda }\Big|\frac{m(\mu\hat{\ee}_j)}{m(\mu o(\hat{\ee}_j)}-\frac{\mg(\ee_j)}{\mg(o(\ee_j))}\Big| \\
& \qquad \qquad + \sup_{\lambda<|\mu|<2\lambda }\Big|\frac{m(\mu\hat{\ee}_\ell)}{m(\mu o(\hat{\ee}_\ell)}
-\frac{\mg(\ee_\ell)}{\mg(o(\ee_\ell))}\Big|\Big).
\end{align*}
Hence \cref{condicionshort} is direct consequence of \cref{measureshort}. Similar computations yield the same property for the symbols $a,b,c$ once one takes into account the invariance of condition \cref{measureshort} under a finite shift.

To finish the proof we need only to treat the multiplicative perturbation $R-R_\Gamma=R_S+R_L$. The proof is the same that for \cref{principal} setting
\begin{equation*}
\[b(\ff_s)\](\mu)_{jj}=R_S(\mu\hat{\ee}_j)+R_L(\mu\hat{\ee}_j)-R_L(\mu\hat{\ee}_1)\ ,
\end{equation*}
and
\begin{equation*}
c(\mu)=R_L(\mu\hat{\ee}_1)\ .
\end{equation*}
Then, to show the short range condition on $R_L(\mu\hat{\ee}_j)-R_L(\mu\hat{\ee}_1)$ for $1\leq j\leq l$ one needs to fix paths $\alpha_j=\{\e_{j,p}\}_{p=1}^N$ between $t(\hat{\ee}_1)$ and $o(\hat{\ee}_j)$ and compute:
\begin{multline*}
|R_L(\mu\hat{\ee}_j)-R_L(\mu\hat{\ee}_1)|=|(R_L(\mu\hat{\ee}_j)-R_L(\mu\e_{j,N}))+\sum_{p=2}^{N}(R_L(\mu\e_{j,p})-R_L(\mu\e_{j,p-1}))\\
\hspace{50pt}+(R_L(\mu\e_{j,1}))-R_L(\mu\hat{\ee}_1)|
\end{multline*}
Each term has the required decay \cref{condicionshort} as a consequence of \cref{Rlhipotesis}. Equivalently, to show the long range condition \cref{condicionlong} for $\triangle_jc$ one needs to fix paths between $t(\ee_1)$ and $o(\delta_j\ee_1)$ for each $1\leq j\leq d$, and repeat the previous computation.
\end{proof}
%-------------------------------------------------------------------------------------------------------
\subsection*{Acknowledgments}
Various parts of this paper were written during the author's stay in the Graduate School of Mathematics of Nagoya University. He is grateful to Nagoya University for the kind hospitality. He would like to thank Professor Serge Richard for its encouragement and constant support during the writing of this paper.
\printbibliography

\end{document}